\newcommand{\dd}{\mathinner{\ldotp\ldotp}}
\newcommand{\cpos}{\mathit{P}}      
\title{Quasiperiodicities in Fibonacci strings}
\author{Michalis Christou\inst{1}, Maxime Crochemore \inst{1,2}, Costas S. Iliopoulos\inst{1,3}}
\institute{%
$\!^1\ ${King's College London, London WC2R 2LS, UK}\\
\email {\{michalis.christou,maxime.crochemore,csi\}@dcs.kcl.ac.uk}\\
$\!^2\ ${Universit\'e Paris-Est, France}\\
$\!^3\ ${Digital Ecosystems \& Business Intelligence Institute, Curtin University\\ GPO Box U1987 Perth WA 6845, Australia}\\
}
\begin{document}
\maketitle

\begin{abstract}
We consider the problem of finding quasiperiodicities in a Fibonacci string.
A factor $u$ of a string $y$ is a \emph{cover} of $y$ if every letter of $y$ falls within some occurrence of $u$ in $y$.
A string $v$ is a \emph{seed} of $y$, if it is a cover of a superstring of $y$.
A left seed of a string $y$ is a prefix of $y$ that it is a cover of a superstring of $y$.
Similarly a right seed of a string $y$ is a suffix of $y$ that it is a cover of a superstring of $y$.
In this paper, we present some interesting results regarding quasiperiodicities in Fibonacci strings, 
we identify all covers, left/right seeds and seeds of a Fibonacci string and all covers of a circular Fibonacci string.
\end{abstract}

\section*{Introduction}
The notion of periodicity in strings is well studied in many fields like
combinatorics on words, pattern matching, data compression and automata
theory (see \cite{Lot01,Lot05}), because it is of paramount importance
in several applications, not to talk about its theoretical aspects.

The concept of quasiperiodicity is a generalization of the notion
of periodicity, and was defined by Apostolico and Ehrenfeucht in \cite{162414}.
In a periodic repetition the occurrences of the single periods do not overlap.
In contrast, the quasiperiods of a quasiperiodic string may overlap.
We call a factor $u$ of a nonempty string $y$ a cover of $y$, if every letter
 of $y$ is within some occurrence of $u$ in $y$. Note that we consider the {\em aligned covers},
where the cover $u$ of $y$ needs to be a border (i.e. a prefix and a suffix) of $y$. 
{\em Seeds} are regularities of strings strongly related to the notion of
cover, as it a seed is a cover of a superstring of the word. They were first defined and studied by Iliopoulos, Moore and Park \cite{Iliopoulos96}.
A left seed of a string $y$, firstly defined in \cite{christou2011efficient}, is a prefix of $y$ that is a cover of a superstring of $y$. Similarly a right seed of a string $y$, also firstly defined in \cite{christou2011efficient}, is a suffix of $y$ that is a cover of a superstring of $y$.

A fundamental problem is to find all covers of a string. A linear time algorithm was given by Moore and Smyth\cite{moore1994optimal}, Li and Smyth\cite{cover_bill} (this algorithm gives also all the covers for every prefix of the string) and an $O(log(log(|y|)))$ work optimal parallel algorithm was given later by Iliopoulos and Park\cite{iliopoulos1996work}. The corresponding problem on seeds is harder, at the moment the fastest and only algorithm is by Iliopoulos, Moore and Park\cite{Iliopoulos96}, running in $O(|y|log|y|)$. 

Fibonacci strings are important in many concepts\cite{berstelfibonacci} and are often cited as a worst case example for many string algorithms. Over the years much scientific work has been done on them, e.g. locating all factors of a Fibonacci string\cite{chuan2005locating}, characterizing all squares of a Fibonacci string\cite{fraenkel1999exact,iliopoulos1997characterization}, identifying all covers of a circular Fibonacci string\cite{iliopoulos1998covers}, identifying all borders of a Fibonacci string\cite{cummings1996borders}, finding palindromes of a Fibonacci word\cite{droubay1995palindromes}, etc.

In our paper we identify all left/right seeds, covers and seeds of a Fibonacci string as well as all covers of a circular Fibonacci string, using a different approach than that of Iliopoulos, Moore and Smyth\cite{iliopoulos1998covers}. It is important to note that we restrict to those quasiperiodicities that are substrings of the Fibonacci string.

The rest of the paper is structured as follows. In Section \ref{sec:definitions}, we present the basic definitions used throughout
the paper, and we define the problems solved. In Section \ref{sec:properties}, we prove some properties of seeds, covers, periods and borders
used later for finding quasiperiodicities in Fibonacci strings. In Section \ref{sec: quasiperiodicities} we identify quasiperiodicities in Fibonacci strings and circular Fibonacci strings. Finally, we give some future proposals and a brief conclusion in Section \ref{sec:conclusion}.

\section{Definitions and Problems}\label{sec:definitions}

Throughout this paper we consider a string $y$ of length $|y| = n$, $n>0$, on a
fixed alphabet. It is represented as $y[1\dd n]$.
A string $w$ is a {\em factor} of $y$ if $y=uwv$ for two strings $u$ and $v$.
It is a {\em prefix} of $y$ if $u$ is empty and a {\em suffix} of $y$
if $v$ is empty.
A string $u$ is a \textit{border} of $y$ if $u$ is both a prefix and a suffix of $y$.
{\em The border} of $y$, denoted by $\textit{border}(y)$, is the length of the longest border of $y$.
A string $u$ is a \textit{period} of $y$ if $y$ is a prefix of $u^k$ for
some positive integer $k$, or equivalently if $y$ is a prefix of $uy$.
{\em The period} of $y$, denoted by $\textit{period}(y)$, is the length of the shortest period of $y$.
For a string $u=u[1\dd m]$ such that $u$ and $v$ share a common part
 $u[m-\ell+1 \dd m]=v[1\dd \ell]$ for some $1 \leq \ell \leq m$, the string
 $u[1\dd m]v[\ell+1\dd n]=u[1\dd m-\ell]v[1 \dd n]$ is called
 a {\em superposition} of $u$ and $v$ with an \textit{overlap} of length $\ell$.
A string $x$ of length $m$ is a {\em cover} of $y$ if both
$m<n$ and there exists a set of positions $\cpos \subseteq \{1, \ldots, n-m+1\}$
that satisfies both $y[i\dd i+m-1]=x$ for all $i\in \cpos$ and
$\bigcup_{i\in \cpos} \{ i,\ldots,i+m-1 \} = \{1,\ldots,n\}$.
A string $v$ is a \emph{seed} of $y$, if it is a cover
of a superstring of $y$, where a superstring of $y$ is a string of form $uyv$ and $u,v$ are possiblu empty strings. A left seed of a string $y$ is a prefix of $y$ that is
a cover of a superstring of $y$ of the form $yv$, where $v$ is a possibly empty string.
Similarly a right seed of a string $y$ is a suffix of $y$ that is
a cover of a superstring of $y$ of the form $vy$, where $v$ is a possibly empty string.\\
\noindent
We define a (finite) Fibonacci string $F_{k}$ , $k\in\{0,1,2,\dots\}$ , as follows: \\
$F_{0} = b,\quad F_{1} = a,\quad F_{n}=F_{n-1}F_{n-2} \quad n\in\{2,3,4,\dots\}$\\
A (finite) circular Fibonacci string $C(F_{k})$ , $k\in\{0,1,2,\dots\}$ , is made by concatenating the first letter of $F_{k}$ to its last letter.
As before a substring $u$ of $C(F_{k})$ is a cover of $C(F_{k})$ if every letter of $C(F_{k})$ falls within an occurrence of $u$ within $C(F_{k})$.\\

\noindent
We consider the following problems:
\begin{problem}Identify all left seeds of some Fibonacci string $F_{n}$.
\end{problem}
\begin{problem}Identify all right seeds of some Fibonacci string $F_{n}$.
\end{problem}
\begin{problem}Identify all covers of some Fibonacci string $F_{n}$.
\end{problem}
\begin{problem}Identify all seeds of some Fibonacci string $F_{n}$.
\end{problem}
\begin{problem}Identify all covers of some circular Fibonacci string $C(F_{n})$.
\end{problem}


\section{Properties}\label{sec:properties}

In this section, we prove and also quote some properties for the covers, the left/right seeds and the seeds of a given string as well as some facts on Fibonacci strings that will prove useful later on the solution of the problems that we are considering.

\begin{lemma}\cite{christou2011efficient}
A string $z$ is a left seed of $y$ iff it is a cover of a prefix of $y$ whose length is at least the period of $y$.
\label{lemma ls condition}
\end{lemma}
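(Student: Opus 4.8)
The plan is to prove the two implications of the equivalence separately; throughout write $q:=\textit{period}(y)$, dispose of the trivial case $z=y$ by the usual convention that a string covers itself, and henceforth assume $|z|<|y|$.

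For the ``if'' direction, suppose $z$ is a prefix of $y$ that covers the prefix $y[1\dd p]$ with $p\ge q$. Put $w=y[1\dd q]$, so that $y$, and therefore $y[1\dd p]$ and $z$, are prefixes of the infinite periodic word $w^{\omega}$. First I would show, by induction on $j\ge 0$, that $z$ covers the length-$(p+jq)$ prefix $P_j$ of $w^{\omega}$: the base case $j=0$ is the hypothesis, and for the inductive step one uses that $P_{j+1}$ has period $q$, so that every occurrence of $z$ inside $P_j$, shifted right by $q$, is again an occurrence of $z$ inside $P_{j+1}$; the original occurrences cover the positions $1,\dots,p+jq$, the shifted ones cover $q+1,\dots,p+(j+1)q$, and because $q\le p$ these two ranges overlap and together exhaust $P_{j+1}$. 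Choosing $j$ with $p+jq\ge|y|$ makes $y$ a prefix of $P_j$, hence $P_j=yv$ for some possibly empty $v$, and since $z$ covers $P_j$ it is a left seed of $y$.

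For the ``only if'' direction, suppose $z$ is a left seed, so $z$ is a prefix of $y$ and covers some $yv$. List the occurrences of $z$ in $yv$ by increasing start position as $1=i_1<i_2<\dots<i_t$; since they cover $yv$, consecutive ones satisfy $i_{l+1}\le i_l+|z|$. Let $l^{*}$ be the largest index with $i_{l^{*}}+|z|-1\le|y|$, that is the last occurrence lying entirely inside $y$, and set $p:=i_{l^{*}}+|z|-1\le|y|$. Because every gap in the chain $i_1,\dots,i_{l^{*}}$ is at most $|z|$, these occurrences — all of which lie inside $y$ — knit together to cover the positions $1,\dots,p$, so $z$ covers $y[1\dd p]$. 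If $p=|y|$ then $p\ge q$ and we are done. Otherwise position $p+1$ still lies inside $y$ and is covered by occurrence $i_{l^{*}+1}$, which starts at $i_{l^{*}+1}\le i_{l^{*}}+|z|=p+1$ and, by maximality of $l^{*}$, runs past the right end of $y$; hence the nonempty suffix $y[i_{l^{*}+1}\dd|y|]$ is a prefix of $z$ and therefore of $y$, so $y$ has a nonempty border and $q\le i_{l^{*}+1}-1\le p$. In either case $z$ covers the prefix $y[1\dd p]$ of $y$ with $p\ge q$, as required.

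I expect the only genuinely delicate step to be in the ``only if'' direction, namely the argument that it is precisely the occurrence $i_{l^{*}+1}$ straddling the right end of $y$ that forces the period bound $q\le p$; the remaining ingredients — the interval bookkeeping showing that a chain of occurrences with consecutive gaps $\le|z|$ covers exactly a contiguous prefix, the periodic-shift step in the ``if'' direction, and the degenerate cases $z=y$, $p=|y|$ and $v$ empty — need care but no new ideas.
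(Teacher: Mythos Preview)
Your proof is correct. The ``if'' direction is essentially the same as the paper's, just spelled out more explicitly with an induction on $j$ where the paper compresses the argument into one line (``$z$ is a cover of $u^{k}v=ywv$'').

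The ``only if'' direction genuinely differs. The paper splits into two cases according to whether $|z|\le\textit{border}(y)$ or $|z|>\textit{border}(y)$: in the first case it locates the occurrence of $z$ covering position $\textit{period}(y)$ and reads off a covered prefix of the form $uv$ with $|u|=\textit{period}(y)$; in the second it argues by contradiction that a failure would produce a border longer than $\textit{border}(y)$. Your argument instead works uniformly: you walk the chain of occurrences, take the last one lying entirely inside $y$, and use the next (straddling) occurrence to exhibit a border of $y$ whose complementary period is at most $p$. This is cleaner and avoids the case split; it also makes explicit which prefix $y[1\dd p]$ is actually covered, whereas the paper's second case is purely existential. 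The paper's route, on the other hand, is shorter once one is willing to accept the terse bookkeeping.

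One tiny wording point: when $q=p$ the two ranges $[1,p+jq]$ and $[q+1,p+(j+1)q]$ are adjacent rather than overlapping, but they still exhaust $P_{j+1}$, so the conclusion stands.
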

\begin{proof}
Direct: Suppose a string $z$ is a cover of a prefix of $y$, say $uv$, larger or equal to $\text{period}(y)$, where $|u|=\text{period}(y)$ and $v$ is
a possibly non empty string. Let $k$ the smallest integer such that $y$ a prefix of $u^{k}$. Then $z$ is a cover of $u^{k}v=ywv$, for some string $w$,
possibly empty. Therefore $z$ is a left seed of $y$.\\
\noindent Reverse: Let $z$ be a left seed of $y$.
\begin{itemize}
 \item if $|z|\leq \textit{border}(y)$. Then a suffix $v$ of $z$ (possibly empty) is a prefix of the border (consider the left seed that covers $y[\textit{period}(y)]$). Then $z$ is a cover of $uv$, where $u$ is the period of $y$.
 \item if $|z| > \textit{border}(y)$. Let $z$ not a cover of a prefix of $y$ larger or equal to $|\textit{period}(y)|$.
Let $v$ a border of $y$ such that $|v| = \textit{border}(y)$. Then $v$ is a factor of $z$, such that $z=uvw$, where $u$ and $w$
are non empty words (consider the left seed that covers $y[\textit{period}(y)]$). This gives $uv$ a longest border for $y$, which is a contradiction.\qed
\end{itemize}
\end{proof}

\begin{lemma}\cite{christou2011right}
A string $z$ is a right seed of $y$ iff it is a cover of a suffix of $y$ whose length is at least the period of $y$.
\label{lemma rs condition}
\end{lemma}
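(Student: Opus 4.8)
The statement to prove is Lemma~\ref{lemma rs condition}, the right-seed analogue of Lemma~\ref{lemma ls condition}. The plan is to mirror the proof of Lemma~\ref{lemma ls condition} after dualising all the notions: where the left-seed lemma uses prefixes, periods read left-to-right, and the \emph{longest border} as a prefix, the right-seed version uses suffixes, the symmetric period read right-to-left, and the longest border as a suffix. Concretely, I would first record the elementary observation that $z$ is a right seed of $y$ (a suffix of $y$ covering some $vy$) if and only if $\widetilde{z}$ is a left seed of $\widetilde{y}$, where $\widetilde{\cdot}$ denotes string reversal; likewise $z$ covers a suffix $w$ of $y$ of length $\ge \text{period}(y)$ iff $\widetilde{z}$ covers the prefix $\widetilde{w}$ of $\widetilde{y}$, and reversal preserves both period length and border length. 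Given this, the lemma follows immediately from Lemma~\ref{lemma ls condition} applied to $\widetilde{y}$.

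If instead a self-contained argument is wanted (matching the paper's style, which proves Lemma~\ref{lemma ls condition} directly rather than by symmetry), I would repeat the two directions verbatim with the roles swapped. For the direct implication: suppose $z$ covers a suffix $vu$ of $y$ with $|u| = \text{period}(y)$ and $v$ possibly empty; taking $k$ minimal with $y$ a suffix of $u^{k}$ — note the shortest period generates $y$ as a suffix of its powers exactly as it does as a prefix — we get that $z$ covers $u^{k}v$ read appropriately, which is a superstring of $y$ of the form $wy$, so $z$ is a right seed. For the reverse implication, let $z$ be a right seed of $y$. If $|z| \le \text{border}(y)$, a prefix of $z$ is a suffix of the longest border (consider the right-seed occurrence that covers position $y[n - \text{period}(y) + 1]$, i.e.\ the first position not reached by the trailing copy of the period), and then $z$ covers $vu$ where $u$ is the shortest period as a suffix. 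If $|z| > \text{border}(y)$, assume for contradiction that $z$ does not cover any suffix of $y$ of length $\ge \text{period}(y)$; then the longest border $v$ occurs strictly inside $z$, say $z = uvw$ with $u,w$ nonempty, and combining the border $v$ of $y$ with this internal occurrence yields a border of $y$ longer than $\text{border}(y)$, a contradiction.

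The main obstacle is not any deep combinatorial fact but getting the boundary bookkeeping of the reverse direction exactly right: one must argue carefully that the specific right-seed occurrence covering the ``critical'' letter at position $n - \text{period}(y) + 1$ forces the claimed structure on $z$, and that in the long case the forced internal occurrence of the longest border genuinely produces a \emph{strictly} longer border rather than merely re-deriving the same one. Since this is precisely the reasoning carried out for Lemma~\ref{lemma ls condition} with left/right interchanged, I expect no new difficulties, and the cleanest write-up is the reversal argument: state the reversal correspondence, note its invariance of period and border lengths, and invoke Lemma~\ref{lemma ls condition}. \qed
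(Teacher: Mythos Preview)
Your proposal is correct and matches the paper's approach exactly: the paper's own proof consists solely of the line ``Similar to the proof of Lemma~\ref{lemma ls condition},'' which is precisely what you carry out, either by the reversal correspondence or by dualising the two cases directly. There is nothing to add.
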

\begin{proof}
Similar to the proof of Lemma \ref{lemma ls condition}. \qed
\end{proof}

\begin{lemma}\cite{moore1994optimal}
Let $u$ be a proper cover of $x$  and let $z\neq u$ be a substring of $x$ such that $|z|\leq |u|$. Then $z$ is a cover of $x$ if and only if $z$ is a cover of $u$.
\label{lemma cover of a cover}
\end{lemma}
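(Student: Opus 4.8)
The plan is to prove both implications from the elementary fact that a cover is in particular a border, i.e.\ simultaneously a prefix and a suffix, of the string it covers. Thus $u$ is a border of $x$, and whenever $z$ covers one of $x$ or $u$ it is a border of that string. I would open with a routine normalization step: in either direction $z$ is a \emph{proper} border of $u$. Indeed, when $z$ covers $x$, both $z$ and $u$ are prefixes (and both are suffixes) of $x$ with $|z|\le|u|$, so $z$ is both a prefix and a suffix of $u$; and $z\neq u$ then forces $|z|<|u|$, since two borders of $x$ of equal length coincide. When $z$ covers $u$ the inequality $|z|<|u|$ is immediate from the definition of a cover. This observation, together with $|u|<|x|$ coming from properness of the cover $u$, is used on both sides.

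For the \emph{if} direction, assume $z$ is a cover of $u$, and lift the tiling of $u$ by $z$ through the tiling of $x$ by $u$: given any position $p$ of $x$, since $u$ covers $x$ there is an occurrence $x[a\dd a+|u|-1]=u$ with $a\le p\le a+|u|-1$; since $z$ covers $u$ there is an occurrence $u[c\dd c+|z|-1]=z$ with $c\le p-a+1\le c+|z|-1$; composing these, $x[a+c-1\dd a+c+|z|-2]=z$ is an occurrence of $z$ in $x$ that contains $p$. Since $z$ is a border of $x$ with $|z|<|u|<|x|$ by the opening step, $z$ is a proper cover of $x$.

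For the \emph{only if} direction, assume $z$ is a cover of $x$; the goal is to show $z$ covers the prefix $u=x[1\dd|u|]$. The one delicate point is that an occurrence of $z$ in $x$ witnessing coverage of a position inside this prefix may extend past position $|u|$, hence fail to be an occurrence of $z$ inside $u$. To rule this out, suppose some position of $u$ lies in no occurrence of $z$ contained in $x[1\dd|u|]$, and let $j$ be the largest such position. Because $z$ is a suffix of $u$, the occurrence $u[|u|-|z|+1\dd|u|]=z$ lies within the prefix and covers every position $>|u|-|z|$, so $j\le|u|-|z|$, that is $j+|z|-1<|u|$. Pick an occurrence $x[i'\dd i'+|z|-1]=z$ with $i'\le j\le i'+|z|-1$, which exists since $z$ covers $x$. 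Then $i'+|z|-1\le j+|z|-1<|u|$, so this occurrence is entirely contained in $x[1\dd|u|]$ and contains $j$, contradicting the choice of $j$; hence $z$ covers $u$.

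Most of the work is bookkeeping with occurrence positions; the step I expect to require the most care is precisely this boundary analysis in the \emph{only if} direction, namely disposing of occurrences of $z$ that straddle the right endpoint of the prefix $u$. The device of taking the largest uncovered position and combining it with the suffix-border property of $z$ is what makes that step go through, and I would present it in exactly that form.
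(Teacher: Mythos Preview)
Your argument is correct and follows the same line as the paper's: both directions hinge on the observation that $z$ is a border of $u$, with the ``if'' direction lifting the tiling through $u$ and the ``only if'' direction using the suffix occurrence of $z$ in $u$ to handle the boundary. The paper's proof is considerably terser---it simply writes ``$z$ is a border of $x$ and hence of $u$; thus $z$ must also be a cover of $u$''---whereas you spell out the boundary analysis explicitly via the largest-uncovered-position device; your version is a faithful expansion of what the paper leaves implicit.
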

\begin{proof}
Clearly if $z$ is a cover of $u$ and $u$ is a cover of $x$ the $z$ is a cover of $x$. Suppose now that both $z$ and $u$ are covers of $x$. Then $z$ is a border of $x$ and hence of $u$ ($|z|\leq |u|$); thus $z$ must also be a cover of $u$.\qed
\end{proof}

\begin{lemma}\label{lem all borders Fib}\cite{cummings1996borders}
 \begin{equation}\text{All borders of $F_{n}$ are:}
            \begin{cases}
             \{\}, & \text{$n=\{0,1,2\}$}\\
             \{F_{n-2},F_{n-4},F_{n-6},\dots F_{1}\}, & \text{$n=2k+1$,$k\geq1$}\\
	     \{F_{n-2},F_{n-4},F_{n-6},\dots F_{2}\}, & \text{$n=2k$,$k\geq2$}
            \end{cases}
  \end{equation}
\end{lemma}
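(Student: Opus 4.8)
The plan is to prove the characterization of all borders of $F_n$ by induction on $n$, exploiting the recursive definition $F_n = F_{n-1}F_{n-2}$ together with the well-known fact that consecutive Fibonacci strings differ only by a swap of their last two letters, i.e. $F_{n-1}F_{n-2}$ and $F_{n-2}F_{n-1}$ agree on all but their last two positions. The base cases $n\in\{0,1,2\}$ are immediate: $F_0$ and $F_1$ are single letters with no proper border, and $F_2 = ab$ has no nonempty border since its first and last letters differ; so the border set is empty, as claimed.

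For the inductive step I would first establish that each $F_{n-2},F_{n-4},\dots$ listed is indeed a border of $F_n$. Since $F_n = F_{n-1}F_{n-2}$, the string $F_{n-2}$ is a suffix of $F_n$, and $F_{n-2}$ is a prefix of $F_{n-1}$ (again by the recursion, unwound once), hence a prefix of $F_n$; so $F_{n-2}$ is a border. Then, because $F_{n-2}$ itself is a Fibonacci string, I apply the induction hypothesis to it: the borders of $F_{n-2}$ are $F_{n-4},F_{n-6},\dots$, and every border of a border of $F_n$ is again a border of $F_n$, which yields the whole listed chain. The parity split in the statement is just bookkeeping: starting from $F_{n-2}$ and repeatedly subtracting $2$ terminates at $F_1$ when $n$ is odd and at $F_2$ when $n$ is even.

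The harder direction, and the main obstacle, is showing there are \emph{no other} borders — in particular that there is no border strictly between $F_{n-2}$ and $F_n$ in length, and none of length strictly between consecutive terms $F_{n-2i}$ and $F_{n-2i-2}$. For the top gap, suppose $v$ is a border of $F_n$ with $|F_{n-2}| < |v| < |F_n|$. Then $|F_{n-2}| < |v| \le |F_{n-1}|$ would force, via the prefix/suffix structure and the fact that $\text{period}(F_n) = |F_{n-1}|$ (so the longest border of $F_n$ has length $|F_{n-2}|$, which one may cite as a standard consequence or prove directly from the near-commutation $F_{n-1}F_{n-2}$ vs. $F_{n-2}F_{n-1}$), a contradiction; the case $|v| > |F_{n-1}|$ is impossible since a border is shorter than the word and also because a long border would overlap itself inside $F_n$ in a way incompatible with the last-two-letter swap. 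Once the longest proper border is pinned to $F_{n-2}$, Lemma~\ref{lemma cover of a cover}-style reasoning (or simply: a border of $F_n$ shorter than $F_{n-2}$ is a border of $F_{n-2}$) reduces the remaining gaps to the induction hypothesis applied to $F_{n-2}$, closing the argument.

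To make the "no intermediate border" step clean I would isolate, as the one genuine computation, the claim that $F_{n-1}F_{n-2}[1\dd\,|F_n|-2] = F_{n-2}F_{n-1}[1\dd\,|F_n|-2]$ and that the last two letters are swapped; this is itself an easy induction on $n$. From it, any candidate border $v$ with $|F_{n-2}| < |v|$ would have to match a prefix of $F_n$ that, by the swap, disagrees with the required suffix at one of the last two positions, giving the contradiction. I expect the bulk of the writing to be this swap lemma plus careful index arithmetic for the parity cases; everything else follows formally from induction and the elementary observation that borders compose transitively.
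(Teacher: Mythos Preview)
The paper does not prove this lemma: it is stated with a citation to \cite{cummings1996borders} and no proof is given (the lemma is imported as a known fact and then used as a tool elsewhere). There is therefore nothing in the paper to compare your argument against.

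On its own merits your plan is essentially correct and standard. The easy direction (that every $F_{n-2i}$ is a border, via $F_n=F_{n-1}F_{n-2}$, the prefix relation $F_{n-2}\sqsubseteq F_{n-1}$, and transitivity of borders) is fine. For the hard direction your reduction is right in spirit: once the longest proper border is shown to be $F_{n-2}$, every shorter border of $F_n$ is a border of $F_{n-2}$ and the induction hypothesis finishes the job. The one place where your write-up is still a sketch is the sentence ``any candidate border $v$ with $|F_{n-2}|<|v|$ would have to match a prefix of $F_n$ that, by the swap, disagrees with the required suffix at one of the last two positions'': as stated this is not an argument, because a border of length $>|F_{n-2}|$ does not in general reach the last two positions of $F_n$ in the way the swap lemma talks about. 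A clean way to close that gap is to combine your swap observation (which gives that $F_n[1\dd |F_n|-2]$ has period $|F_{n-2}|$) with the obvious period $|F_{n-1}|$ of $F_n$ and apply the Fine--Wilf theorem together with $\gcd(|F_{n-1}|,|F_{n-2}|)=1$ to rule out any period strictly smaller than $|F_{n-1}|$; equivalently, to rule out any border strictly longer than $F_{n-2}$. With that step made explicit, your induction goes through.
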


\begin{lemma}\cite{iliopoulos1997characterization}\label{lem expansion Fib}
For any integer $k\geq2$, $F_{k}=P_{k}\delta_{k}$, where $P_{k}=F_{k-2}F_{k-3}\dd F_{1}$ and $\delta_{k}=ab$ if $k$ is even, and $\delta_{k}=ba$ otherwise.
\end{lemma}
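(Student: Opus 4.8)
The plan is to prove the factorisation by induction on $k$, the one subtlety being that the recurrence $F_k=F_{k-1}F_{k-2}$ produces the factor $F_{k-2}$ on the \emph{right} of the expansion of $F_{k-1}$, whereas the claimed formula places it on the far \emph{left}; this is resolved by the classical ``near-commutativity'' of consecutive Fibonacci strings.

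First I would record two auxiliary facts, each proved by an immediate induction on the index using only $F_n=F_{n-1}F_{n-2}$. \textbf{Fact A:} for every $k\ge 2$ the string $F_k$ ends with $\delta_k$ (equivalently, all $F_n$ with $n\ge 2$ of a fixed parity share their last two letters); the base cases $k=2,3$ are checked directly, and for $k\ge 4$ one uses $F_k=F_{k-1}F_{k-2}$ together with $|F_{k-2}|\ge 2$ and the fact that $k$ and $k-2$ have the same parity. \textbf{Fact B:} for every $m\ge 1$ the strings $F_mF_{m+1}$ and $F_{m+1}F_m$ have equal length and agree in every position except the last two; the base cases $m\in\{1,2\}$ are inspected by hand, and for $m\ge 3$ one substitutes $F_{m+1}=F_mF_{m-1}$ into both products, cancels the common prefix $F_m$, and invokes the case $m-1$. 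Combining Fact A and Fact B, for every $k\ge 3$ there is a string $C$ with $|C|=|F_k|-2$ such that $F_{k-1}F_{k-2}=C\delta_k$ and $F_{k-2}F_{k-1}=C\delta_{k-1}$: indeed $F_{k-1}F_{k-2}=F_k$ ends with $\delta_k$ by Fact A, $F_{k-2}F_{k-1}$ ends with the suffix $\delta_{k-1}$ of $F_{k-1}$ (again Fact A, since $|F_{k-1}|\ge 2$), and the two products agree everywhere else by Fact B.

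Now for the lemma itself I argue by induction on $k$. The base cases $k=2$ (where $P_2$ is the empty word and $F_2=\delta_2=ab$) and $k=3$ (where $P_3=F_1=a$ and $F_3=a\cdot ba$) are immediate. Let $k\ge 4$ and set $W=F_{k-3}F_{k-4}\cdots F_1=P_{k-1}$, so $P_k=F_{k-2}W$; the induction hypothesis reads $F_{k-1}=W\delta_{k-1}$, that is, $W$ is $F_{k-1}$ with its last two letters deleted. Then $F_{k-2}F_{k-1}=(F_{k-2}W)\delta_{k-1}$, and comparing this with $F_{k-2}F_{k-1}=C\delta_{k-1}$ and cancelling $\delta_{k-1}$ on the right yields the key identity $F_{k-2}W=C$. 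Therefore
\[
P_k\delta_k=F_{k-2}W\delta_k=C\delta_k=F_{k-1}F_{k-2}=F_k,
\]
which completes the induction.

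The real obstacle is exactly this reordering of factors: Fact B is the device that lets $F_{k-2}$ be moved to the front of the product at the cost of only its last two letters, and everything else is bookkeeping. One must be slightly careful with small indices so that ``delete the last two letters'' is meaningful — this is why $k=2,3$ are peeled off as base cases, the inductive step is run only for $k\ge 4$, and Fact B is stated from $m=1$ onwards.
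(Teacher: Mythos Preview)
Your proof is correct. The paper itself gives no argument beyond ``Easily proved by induction'', so there is nothing substantive to compare against; your write-up supplies exactly the details one would expect such an induction to need. In particular, the near-commutativity statement (your Fact~B) is the standard device for this lemma, since the recurrence appends $F_{k-2}$ on the right while the desired factorisation places it on the left, and your derivation $P_k\delta_k=F_{k-2}W\delta_k=C\delta_k=F_{k-1}F_{k-2}=F_k$ is clean and correct.
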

\begin{proof}
Easily proved by induction. \qed 
\end{proof}

It is sometimes useful to consider the expansion of a Fibonacci string as a concatenation of two Fibonacci substrings. We define the $F_{m},F_{m-1}$ expansion of $F_{n}$, where $n \in \{2,3,\dots \}$ and $m\in \{1,2,\dots,n-1\}$, as follows:
\begin{itemize}
 \item Expand  $F_{n}$ using the recurrence formula as $F_{n-1}F_{n-2}$.
 \item Expand  $F_{n-1}$ using the recurrence formula as $F_{n-2}F_{n-3}$.
 \item Keep expanding as above until $F_{m+1}$ is expanded.
\end{itemize}

\begin{lemma}
The $F_{m},F_{m-1}$ expansion of $F_{n}$, where $n \in \{2,3,\dots \}$ and $m \in \{1,2,\dots,n-1 \}$ is unique.
\end{lemma}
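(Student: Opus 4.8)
The plan is to reduce the statement to a single structural observation: $F_m$ and $F_{m-1}$ end in different letters. First I would record this fact. Directly from $F_0=b$, $F_1=a$ together with the recurrence (the last letter of $F_k$ equals the last letter of $F_{k-2}$ for $k\geq 2$), the last letter of $F_k$ is $b$ when $k$ is even and $a$ when $k$ is odd; equivalently, for $m\geq 2$ this is Lemma~\ref{lem expansion Fib} via $\delta_k$. Hence consecutive Fibonacci strings never share a last letter, and in particular $F_m\neq F_{m-1}$ as strings. Next I would observe that the expansion procedure only ever replaces a block $F_j$ by $F_{j-1}F_{j-2}$, so an easy induction on the number of rounds shows that after the round expanding the blocks of index $n-k+1$ every block has index in $\{n-k,\,n-k-1\}$; since $m\leq n-1$, after $n-m$ rounds we stop with every block equal to $F_m$ or $F_{m-1}$. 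Thus the procedure yields a factorization of $F_n$ into $\{F_m,F_{m-1}\}$-blocks, and it suffices to prove that such a factorization is unique.

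To that end I would prove, by induction on the length, the slightly stronger claim that \emph{any} string which admits a factorization into copies of $F_m$ and $F_{m-1}$ admits exactly one. Suppose $w=Z_1Z_2\cdots Z_t=W_1W_2\cdots W_s$ with all $Z_i,W_j\in\{F_m,F_{m-1}\}$ and $w$ nonempty. Both $Z_t$ and $W_s$ are suffixes of $w$, so the shorter is a suffix of the longer; if their lengths differed then $F_{m-1}$ (length $f_{m-1}$) would be a suffix of $F_m$ (length $f_m$), forcing the two strings to end in the same letter, a contradiction. Hence $|Z_t|=|W_s|$ and $Z_t=W_s$. Cancelling this common last block strictly shortens $w$, so the induction hypothesis gives $t-1=s-1$ and $Z_i=W_i$ for $i<t$, with the empty string as base case. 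In coding-theoretic language this is just the remark that $\{F_m,F_{m-1}\}$ is a suffix code, i.e. unique decipherability read from right to left.

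The only point requiring a little care is the claim in the first paragraph that the procedure halts with blocks of indices exactly $m$ and $m-1$ and nothing smaller; this is the routine induction on rounds indicated above, and it is where one uses $m\leq n-1$ so that at least one round is actually performed. Everything else is bookkeeping. The substantive content — and the main obstacle, in the sense that it is the one idea that is not a calculation — is recognising that it is the differing last letters of consecutive Fibonacci strings that make the decomposition rigid, \emph{despite} $F_{m-1}$ being a prefix of $F_m$ (so that a naive left-to-right parse would not be deterministic).
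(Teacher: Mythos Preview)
Your proof is correct and considerably more detailed than the paper's, which reads in its entirety ``Easily proved by induction.'' There is thus no specific argument in the paper to compare yours against.

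That said, your route is worth a remark. You prove the stronger statement that $\{F_m,F_{m-1}\}$ is a suffix code, so \emph{every} word admitting a factorization over this pair admits only one; the lemma for $F_n$ then follows once you note (as you do) that the rewriting procedure terminates with blocks of indices exactly $m$ and $m-1$. The key combinatorial input you isolate---that $F_m$ and $F_{m-1}$ end in different letters, hence $F_{m-1}$ cannot be a suffix of $F_m$---is exactly the right one, and your observation that one must parse from the right because $F_{m-1}$ \emph{is} a prefix of $F_m$ (so the pair is not a prefix code) pinpoints why the naive left-to-right argument fails. The ``easy induction'' the paper presumably has in mind is likely the more pedestrian one on $n$ (the $F_m,F_{m-1}$ expansion of $F_n$ is that of $F_{n-1}$ concatenated with that of $F_{n-2}$), which implicitly relies on confluence of the rewriting; your argument makes the underlying reason for uniqueness explicit and yields a cleaner, more general statement.
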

\begin{proof}
Easily proved by induction. \qed
\end{proof}

\begin{lemma}\label{lem Fm in Fn}
The starting positions of the occurrences of $F_{m}$ in $F_{n}$ are the starting positions of the factors considered in the $F_{m},F_{m-1}$ expansion of $F_{n}$, where $n \in\{2,3,\dots\}$ and $m\in\{1,2,\dots,n-2\}$ except of the last $F_{m-1}$, if it is a border of $F_{n}$.
\end{lemma}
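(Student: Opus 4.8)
The plan is to prove the claim by induction on $n$, using the recursive structure $F_n = F_{n-1}F_{n-2}$ together with the $F_m, F_{m-1}$ expansion that is being asserted to capture all occurrences. First I would set up the base cases: for small $n$ relative to $m$ (say $n = m+2$) the expansion of $F_n = F_m F_{m-1} F_{m-2}$ already exhibits the occurrences of $F_m$ as a prefix, and one checks directly by the known border structure (Lemma~\ref{lem all borders Fib}) whether $F_m$ occurs again as a suffix, i.e. whether $F_{m-1}F_{m-2}$ ends with a copy of $F_m$; this is exactly the ``except the last $F_{m-1}$ if it is a border'' caveat, so I would want to phrase the base case to make that bookkeeping transparent.

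For the inductive step, I would assume the statement holds for all indices below $n$ and write $F_n = F_{n-1}F_{n-2}$. The occurrences of $F_m$ in $F_n$ split into three types: those lying entirely inside the $F_{n-1}$ part, those lying entirely inside the $F_{n-2}$ part, and those straddling the boundary. The first two types are handled by the induction hypothesis applied to $F_{n-1}$ and $F_{n-2}$ respectively, and these are precisely the factors appearing in the concatenation of the $F_m,F_{m-1}$ expansions of $F_{n-1}$ and of $F_{n-2}$, which by the uniqueness lemma (the $F_m,F_{m-1}$ expansion is unique) glue together to give the $F_m,F_{m-1}$ expansion of $F_n$. The real work is ruling out, or accounting for, a straddling occurrence: I would argue that since $|F_m| = F_{m}$ (the Fibonacci number) and the expansion of $F_{n-1}$ ends in $F_{m-1}$ while that of $F_{n-2}$ begins with $F_m$, a straddling copy of $F_m$ would force a long common overlap that, by Fine--Wilf / the primitivity and border properties of Fibonacci strings, cannot occur except possibly at the very end — this recovers the border exception at the top level and matches the exception already present in the induction hypotheses.

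Concretely, the key steps in order are: (1) state the base case $n = m+2$ and verify it against Lemma~\ref{lem all borders Fib}; (2) in the inductive step, classify occurrences as internal-to-$F_{n-1}$, internal-to-$F_{n-2}$, or boundary-crossing; (3) apply the induction hypothesis to the two internal cases and invoke uniqueness of the $F_m,F_{m-1}$ expansion to see these are exactly the claimed positions; (4) show a boundary-crossing occurrence is impossible by a synchronization argument on the letters around the $F_{n-1}/F_{n-2}$ junction, using that $\delta_k \in \{ab, ba\}$ from Lemma~\ref{lem expansion Fib} distinguishes the junction; (5) reconcile the suffix-border exception so that the final $F_{m-1}$ of the expansion is excluded exactly when it is a border of $F_n$, consistent with Lemma~\ref{lem all borders Fib}.

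The hard part will be step (4): carefully showing that no occurrence of $F_m$ crosses the $F_{n-1}/F_{n-2}$ boundary. The difficulty is that Fibonacci strings are highly self-similar, so a naive letter-comparison argument is delicate; I expect to need the fact that $F_{n-1}$ ends with $\delta_{n-1}$ and $F_{n-2}$ begins as a prefix of a Fibonacci string, so that the two-letter window at the junction is $\delta_{n-1}$ followed by $a$, which breaks the periodic pattern that an internal copy of $F_m$ would require. An alternative, possibly cleaner, route for step (4) is to avoid it entirely: instead of $F_n = F_{n-1}F_{n-2}$, peel off from the \emph{left}, writing $F_n = F_m (F_{m-1} F_{m-2} \cdots)$ where the tail is itself (a prefix-compatible piece of) a Fibonacci string on which induction applies, so that every occurrence of $F_m$ in $F_n$ is either the leading one or lies in the tail; this localizes the boundary issue to a single junction and may let Lemma~\ref{lem all borders Fib} do all the combinatorial work directly. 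I would try the left-peeling version first and fall back to the three-way split if the bookkeeping on which tail-occurrences survive becomes awkward.
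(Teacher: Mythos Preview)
Your inductive plan is a genuinely different route from the paper's proof, which is \emph{not} inductive: the paper works directly with the $F_m,F_{m-1}$ expansion $F_n = F_m F_{m-1} F_m F_m F_{m-1}\cdots$ and asks where a hypothetical extra occurrence of $F_m$ could sit relative to two consecutive blocks. That yields four local patterns ($x y$ with $x$ a suffix and $y$ a prefix of $F_m$ or $F_{m-1}$ in the four possible combinations, plus the pattern $x F_{m-1} y$), and each is dispatched by the border list of $F_m$ (Lemma~\ref{lem all borders Fib}): in every case $x$ and $y$ are forced to be borders whose lengths cannot sum to the required value, except in the single case $x=F_{m-1}$, $y=F_{m-2}$, which is exactly the occurrence of $F_m$ at the start of each $F_{m-1}$ block. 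So the paper's argument is a one-shot local case analysis rather than an induction on $n$.

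Your approach can be made to work, but step~(4) as you state it is wrong, and this is a real gap, not just bookkeeping. Boundary-crossing occurrences at the $F_{n-1}/F_{n-2}$ junction are \emph{not} always impossible. When $n-m$ is even, the expansion of $F_{n-1}$ ends with an $F_{m-1}$ block (equivalently, $F_{m-1}$ is a border of $F_{n-1}$), and the expansion of $F_{n-2}$ begins with $F_m$; since $F_{m-1}F_m$ starts with $F_{m-1}F_{m-2}=F_m$, there \emph{is} an occurrence of $F_m$ straddling the junction, starting at that last $F_{m-1}$ block. This is precisely the occurrence that the induction hypothesis on $F_{n-1}$ excluded (via its own border exception), and it must be \emph{recovered}, not ruled out, to get the correct list for $F_n$. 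Your step~(5) handles the border exception at the end of $F_n$, but the exception inherited from $F_{n-1}$ sits in the \emph{middle} of $F_n$ and is a separate phenomenon. So step~(4) should read ``classify the boundary-crossing occurrences'' rather than ``show they are impossible'': you must show there is exactly one (starting at that last $F_{m-1}$) when $n-m$ is even, and none when $n-m$ is odd (the junction is then $\ldots F_m\,|\,F_m\ldots$, and the paper's border-length count $|x|+|y|=|F_m|$ with $|x|,|y|\le |F_{m-2}|$ kills it). Once you carry out that classification, you will find you are doing essentially the same four-case border computation the paper does --- your induction just relocates it to a single junction per step. The left-peeling alternative does not avoid this either, since the tail after removing a leading $F_m$ is not itself a Fibonacci string, so the induction hypothesis does not apply to it directly.
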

\begin{proof}
Using the recurrence relation we can get the $F_{m},F_{m-1}$ expansion of $F_{n}$ as shown before:\\
$F_{n}=F_{m}F_{m-1}F_{m}F_{m}F_{m-1}F_{m}F_{m-1}F_{m}\dots$\\
We can now observe many occurrences of $F_{m}$ in $F_{n}$. Any other occurrence should have one of the following forms (note that there are no consecutive $F_{m-1}$ in the above expansion):
\begin{itemize}
\item $xy$, where $x$ is a non empty suffix of $F_{m}$ and $y$ a non empty prefix of $F_{m-1}$. Then both $x$ and $y$ are borders of $F_{m}$. It holds that $|x|+|y|=|F_{m}|=|F_{m-1}|+|F_{m-2}|$, but $|x|\leq |F_{m-2}|$, $|y|\leq |F_{m-2}|$ and so there exist no such occurrence of $F_{m}$ in $F_{n}$.
\item $xy$, where $x$ is a non empty suffix of $F_{m-1}$ and $y$ a non empty prefix of $F_{m}$. Then $y$ is also a border of $F_{m}$ and so belongs to $\{F_{m-2},F_{m-4},\dots F_{3}\}$, if $n$ is odd, or to $\{F_{m-2},F_{m-4}\dots F_{4}\}$, otherwise. But $|x|+|y|=|F_{m}|$ and $0<|x|\leq|F_{m-1}|$ so in either case the only solution is $x=F_{m-1}$ and $y=F_{m-2}$ giving the occurrences of $F_{m}$ at the starting positions of $F_{m-1}$ in the above expansion. 
\item $xF_{m-1}y$, where $x$ is a non empty suffix of $F_{m}$ and $y$ a non empty prefix of $F_{m}$. Then both $x$ and $y$ are borders of $F_{m}$. It holds that $|x|+|y|=|F_{m-2}|$, but as both $x$ and $y$ are non empty $|x|\leq |F_{m-4}|$, $|y|\leq |F_{m-4}|$ and so there exist no such occurrence of $F_{m}$ in $F_{n}$.
\item $xy$, where $x$ is a non empty suffix of $F_{m}$ and $y$ a non empty prefix of $F_{m}$(note that there is no such occurrence in the $F_{n-2},F_{n-1}$ expansion). Then both $x$ and $y$ are borders of $F_{m}$. It holds that $|x|+|y|=|F_{m}|$, but as both $x$ and $y$ are non empty $|x|\leq |F_{m-2}|$, $|y|\leq |F_{m-2}|$ and so there exist no such occurrence of $F_{m}$ in $F_{n}$. \qed
\end{itemize}

\end{proof}

\begin{lemma}\label{lem invalid ls}
 For every integer $n\geq5$, $F_{n}[1\dd|F_{n-1}|-1]$ is not a left seed of $F_{n}$. 
\end{lemma}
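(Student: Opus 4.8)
The plan is to use Lemma~\ref{lemma ls condition}: writing $z := F_n[1\dd|F_{n-1}|-1]$, the string $z$ is a left seed of $F_n$ if and only if it is a cover of some prefix $p$ of $F_n$ with $|p|\geq\textit{period}(F_n)$. First I would record, via Lemma~\ref{lem all borders Fib}, that the longest border of $F_n$ is $F_{n-2}$, so $\textit{period}(F_n)=|F_n|-|F_{n-2}|=|F_{n-1}|$. Since $|z|=|F_{n-1}|-1<|F_{n-1}|\leq|p|$, a covering of such a $p$ by $z$ would require an occurrence of $z$ that is a suffix of $p$, and that occurrence would start at position $|p|-|z|+1\geq |F_{n-1}|-(|F_{n-1}|-1)+1=2$. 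Hence it suffices to prove that $z$ occurs in $F_n$ only at position $1$.

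The core step is to bound the occurrences of $z$ through occurrences of $F_{n-2}$. Because $F_n$ begins with $F_{n-1}=F_{n-2}F_{n-3}$ and $|F_{n-2}|\leq|F_{n-1}|-1=|z|$, the prefix of $z$ of length $|F_{n-2}|$ equals $F_{n-2}$; therefore any occurrence of $z$ in $F_n$ forces an occurrence of $F_{n-2}$ at the same starting position. Now I apply Lemma~\ref{lem Fm in Fn} with $m=n-2$: the $F_{n-2},F_{n-3}$ expansion of $F_n$ is $F_{n-2}F_{n-3}F_{n-2}$ (it ends in $F_{n-2}$, so the ``last $F_{m-1}$'' exception does not apply), whence the only occurrences of $F_{n-2}$ in $F_n$ start at positions $1$ and $|F_{n-1}|+1$. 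On the other hand, an occurrence of $z$ must fit inside $F_n$, so it can only start at a position $s\leq|F_n|-|z|+1=|F_{n-2}|+2$. The decisive inequality is $|F_{n-1}|+1>|F_{n-2}|+2$, equivalently $|F_{n-3}|>1$, which holds precisely for $n\geq 5$; thus the occurrence of $F_{n-2}$ at position $|F_{n-1}|+1$ is too far to the right to carry an occurrence of $z$. This leaves position $1$ as the only occurrence of $z$ in $F_n$.

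To conclude, for any prefix $p$ of $F_n$ with $|p|\geq|F_{n-1}|$, every occurrence of $z$ inside $p$ is also an occurrence of $z$ in $F_n$, hence there is only the one at position $1$, which covers the positions $\{1,\dots,|F_{n-1}|-1\}$. This set is distinct from $\{1,\dots,|p|\}$ since $|p|\geq|F_{n-1}|>|F_{n-1}|-1$, so $z$ is not a cover of $p$. By Lemma~\ref{lemma ls condition}, $z=F_n[1\dd|F_{n-1}|-1]$ is not a left seed of $F_n$.

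I expect the delicate point to be the middle paragraph: checking that Lemma~\ref{lem Fm in Fn} really yields only the two occurrences of $F_{n-2}$ (the hypothesis $m\leq n-2$ is met with equality, and the final-factor exception must be ruled out), and handling the length bound $s\leq|F_{n-2}|+2$ carefully so that the threshold comes out as exactly $n\geq 5$. Everything else is routine arithmetic with Fibonacci lengths.
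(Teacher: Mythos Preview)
Your argument has a genuine gap in the middle paragraph, stemming from a misreading of Lemma~\ref{lem Fm in Fn}. That lemma asserts that the occurrences of $F_m$ in $F_n$ start at the starting positions of \emph{all} factors in the $F_m,F_{m-1}$ expansion (both the $F_m$ blocks and the $F_{m-1}$ blocks, the latter because $F_{m-1}$ is always followed by $F_m$ and $F_{m-1}F_{m-2}=F_m$), with the sole exception of a terminal $F_{m-1}$ when it is a border. For $m=n-2$ the expansion $F_{n-2}F_{n-3}F_{n-2}$ therefore yields \emph{three} occurrences of $F_{n-2}$: at positions $1$, $|F_{n-2}|+1$, and $|F_{n-1}|+1$. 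You only listed the first and the third.

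The missing occurrence at position $|F_{n-2}|+1$ is exactly the dangerous one: it lies within your length bound $s\leq|F_{n-2}|+2$, so it is a candidate starting position for $z$, and your inequality $|F_{n-1}|+1>|F_{n-2}|+2$ says nothing about it. To close the gap you must show that the factor $F_n[|F_{n-2}|+1\dd |F_n|-1]$ is not equal to $z$. This is precisely what the paper's proof does: writing $F_n=F_{n-2}F_{n-2}F_{n-5}F_{n-4}$ and using Lemma~\ref{lem expansion Fib}, one computes that this factor equals $F_{n-2}P_{n-3}\delta_{n-4}[1]$, whereas $z=F_{n-2}P_{n-3}\delta_{n-3}[1]$, and the two differ in their last letter since $\delta_{n-3}$ and $\delta_{n-4}$ begin with different letters. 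Once this comparison is supplied, your overall strategy (reduce to Lemma~\ref{lemma ls condition}, then show $z$ occurs only at position~$1$) is sound and in fact coincides with the paper's approach.
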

\begin{proof}
Using the recurrence relation we can expand $F_{n}$ , $n\geq5$ , in the following two ways:\\
$F_{n}=F_{n-2}F_{n-3}F_{n-2}=F_{n-2}F_{n-2}F_{n-5}F_{n-4}$\\
Then one can see that $x=F_{n}[1\dd|F_{n-1}-1|]=F_{n-2}P_{n-3}\delta_{n-3}[1]$(Lemma \ref{lem expansion Fib}). 
Using Lemma \ref{lem Fm in Fn} we can see that by expanding $x$ from the prefix and suffix positions of $F_{n-2}$ we cover $F_{n}$ except $F_{n}[|F_{n-1}|-1]$. Expanding $F_{n-2}$ from its middle occurrence yields the factor $y=F_{n-2}F_{n-5}P_{n-4}\delta_{n-4}[1]=F_{n-2}P_{n-3}\delta_{n-4}[1]$. It is easy to see that $x$ an $y$ differ at their last letter and hence the above result follows. \qed

\end{proof}

\begin{lemma}\label{lem invalid rs}
 For every integer $n\geq5$, $xF_{n-4}$,  where $x$ is a suffix of $F_{n-3}$ and $0<|x|<F_{n-3}$, is not a right seed of $F_{n}$. 
\end{lemma}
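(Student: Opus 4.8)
The plan is to exhibit a single letter of $F_n$ that every right-seed cover would have to reach but that no occurrence of $z=xF_{n-4}$ does, in the spirit of the proof of Lemma~\ref{lem invalid ls}. Write $a=|F_{n-4}|$ and $b=|F_{n-3}|$, so $0<|x|<b$ and $|z|=|x|+a<a+b=|F_{n-2}|<|F_{n-1}|$. I would first note that $z$ is a suffix of $F_n$ (since $F_n=F_{n-1}F_{n-3}F_{n-4}$ and $x$ is a suffix of $F_{n-3}$), and that, by Lemma~\ref{lem all borders Fib}, the longest border of $F_n$ is $F_{n-2}$, hence $\textit{period}(F_n)=|F_n|-|F_{n-2}|=|F_{n-1}|$. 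By Lemma~\ref{lemma rs condition}, a right seed $z$ of $F_n$ must cover a suffix $S$ of $F_n$ with $|S|\ge|F_{n-1}|$; such an $S$ begins at position $|F_n|-|S|+1\le|F_{n-2}|+1$, so $z$ would have to cover position $|F_{n-2}|+1=a+b+1$ of $F_n$. Thus it suffices to show that no occurrence of $z$ in $F_n$ contains position $a+b+1$.

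I would then reduce this to a statement about occurrences of $F_{n-4}$. If $z$ occurs at $p$ and its interval contains $a+b+1$, then $b-|x|+2\le p\le a+b+1$, the $F_{n-4}$-suffix of $z$ occurs at $q=p+|x|$, and $F_n[p\dd q-1]=x$; from $|x|<b$ and $p\le a+b+1$ one gets $q\le a+2b$, and from the occurrence reaching position $a+b+1$ one gets $q\ge b+2$. So I must rule out any occurrence of $F_{n-4}$ at a position $q\in[b+2,a+2b]$ whose $|x|$ preceding letters spell $x$. Iterating the recurrence gives the $F_{n-4},F_{n-5}$ expansion $F_n=F_{n-4}F_{n-5}F_{n-4}F_{n-4}F_{n-5}F_{n-4}F_{n-5}F_{n-4}$, and Lemma~\ref{lem Fm in Fn} shows that the only starting positions of occurrences of $F_{n-4}$ lying in $[b+2,a+2b]$ are $a+b+1$ and, for $n\ge 7$, also $2a+b+1$.

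Finally I would inspect these two positions. The $|x|$ letters preceding $q=a+b+1$ form the length-$|x|$ suffix of $F_n[1\dd a+b]=F_{n-2}=F_{n-3}F_{n-4}$, and the $|x|$ letters preceding $q=2a+b+1$ form the length-$|x|$ suffix of $F_n[1\dd 2a+b]=F_{n-3}F_{n-4}F_{n-4}$ (using that $F_{n-4}$ is a prefix of $F_{n-3}$). In both cases these letters end in the last letter of $F_{n-4}$, whereas $x$, a nonempty suffix of $F_{n-3}$, ends in the last letter of $F_{n-3}$; since $F_k$ ends in $a$ exactly when $k$ is odd, and $n-3,n-4$ have opposite parity, these letters differ. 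So the block preceding $q$ is never $x$, contradicting the supposed occurrence, and $z$ is not a right seed of $F_n$.

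The step I expect to be the main obstacle is the precise enumeration of occurrences of $F_{n-4}$: Lemma~\ref{lem Fm in Fn} must be applied with care when the index is small, because for $n\in\{5,6\}$ (so $F_{n-4}\in\{F_1,F_2\}$) the occurrences of $F_{n-4}$ sit only at the $F_{n-4}$-blocks of the expansion and not at the $F_{n-5}$-blocks, so in those cases $a+b+1$ is the only relevant position; one also has to confirm that the interval $[b+2,a+2b]$ really captures the $F_{n-4}$-part of every occurrence of $z$ that could cover $a+b+1$. Once the occurrence positions are correctly pinned down, the last-letter parity observation closes the argument.
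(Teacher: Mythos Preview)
Your proof is correct and follows essentially the same route as the paper: both use the $F_{n-4},F_{n-5}$ expansion of $F_n$, locate the occurrences of $F_{n-4}$ via Lemma~\ref{lem Fm in Fn}, target the position $|F_{n-4}|+|F_{n-5}|+|F_{n-4}|+1=|F_{n-2}|+1$, and rule out the relevant occurrences by the last-letter parity from Lemma~\ref{lem expansion Fib}. Your version is in fact more carefully argued---you justify via Lemma~\ref{lemma rs condition} why that position must be covered, restrict to the precise window $q\in[b+2,\,a+2b]$, and explicitly handle the small cases $n\in\{5,6\}$---whereas the paper's split into ``3 bad'' and ``5 other'' occurrences glosses over the occurrence at $q=b+a+1$, which does reach the target position but fails the $x$-match for the same parity reason you give.
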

\begin{proof}
Using the recurrence relation we can expand $F_{n}$ , $n\geq5$ , in the following way:\\
$F_{n}=F_{n-4}F_{n-5}F_{n-4}F_{n-4}F_{n-5}F_{n-4}F_{n-5}F_{n-4}$\\
Then any right seed of form $xF_{n-4}$, $0<|x|<|F_{n-3}|$, has $x$ as a suffix of $F_{n-4}F{n-5}$. Clearly the $3$ occurrences of $F_{n-4}$ at the starting positions of $F_{n-5}$ (Lemma \ref{lem Fm in Fn}) cannot be expanded to their left to give right seeds as an $F_{n-4}$ is to their left, which has a different ending than that of $F_{n-5}$ (Lemma \ref{lem expansion Fib}). Then $F_{n}[|F_{n-4}|+|F_{n-5}|+|F_{n-4}|+1]$ can not be covered by expanding the other $5$ occurrences of $F_{n-4}$ in $F_{n}$.\qed
\end{proof}


\section{Quasiperiodicities in Fibonacci strings}
\label{sec: quasiperiodicities}
In this section we identify quasiperiodicities on Fibonacci strings (left seeds, right seeds, seeds, covers) and circular Fibonacci strings (covers).\\

Identifying all covers of a Fibonacci string is made easy by identifying the longest cover of the string and then applying Lemma \ref{lemma cover of a cover} as shown in the theorem below.

\begin{theorem}\label{thm all covers Fib}
 \begin{equation}\text{All covers of $F_{n}$ are:}
            \begin{cases}
             F_{n}, & \text{$n=\{0,1,2,3,4\}$}\\
             \{F_{n}, F_{n-2},F_{n-4},F_{n-6},\dots F_{3}\}, & \text{$n=2k+1$,$k\geq2$}\\
	     \{F_{n}, F_{n-2},F_{n-4},F_{n-6},\dots F_{4}\}, & \text{$n=2k$,$k\geq3$}
            \end{cases}
  \end{equation}

\end{theorem}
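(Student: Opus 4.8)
The plan is to establish the result in two stages: first identify the longest proper cover of $F_n$, and then invoke Lemma~\ref{lemma cover of a cover} to deduce that the set of all covers of $F_n$ is exactly $\{F_n\}$ together with the set of all covers of that longest proper cover, which gives the claimed nested family of Fibonacci strings by induction. The small cases $n\in\{0,1,2,3,4\}$ are checked directly: for $n\le 2$ the strings have length at most $2$ and admit no proper cover (recall the definition requires $m<n$), and for $F_3=aba$ and $F_4=abaab$ one verifies by inspection that no proper prefix-suffix is a cover.

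For the inductive step I would argue that for $n\ge 5$ the longest proper cover of $F_n$ is $F_{n-2}$. That $F_{n-2}$ is a cover of $F_n$ follows from the $F_{n-2},F_{n-3}$ expansion: writing $F_n = F_{n-2}F_{n-3}F_{n-2} = F_{n-2}F_{n-2}F_{n-5}F_{n-4}$ and applying Lemma~\ref{lem Fm in Fn}, the occurrences of $F_{n-2}$ in $F_n$ begin at positions that collectively cover every letter of $F_n$ (this is essentially the computation already carried out in the proof of Lemma~\ref{lem invalid ls}, minus the truncation of the last letter). To see that no longer proper cover exists, note that any cover of $F_n$ must in particular be a border of $F_n$, and by Lemma~\ref{lem all borders Fib} the borders of $F_n$ are exactly $F_{n-2}, F_{n-4}, F_{n-6}, \dots$; so the only border longer than $F_{n-2}$ is $F_n$ itself, and $F_{n-2}$ is the longest candidate for a proper cover. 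Hence $F_{n-2}$ is the longest proper cover once we know it is a cover at all.

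Now Lemma~\ref{lemma cover of a cover} applies with $x=F_n$ and $u=F_{n-2}$: a substring $z\ne F_{n-2}$ of $F_n$ with $|z|\le|F_{n-2}|$ is a cover of $F_n$ if and only if it is a cover of $F_{n-2}$. Therefore the covers of $F_n$ are $\{F_n\}\cup\{\text{covers of }F_{n-2}\}$. Unwinding this recurrence down to the base cases $F_3$ (when $n$ is odd) or $F_4$ (when $n$ is even) — whose only cover is themselves — yields exactly $\{F_n, F_{n-2}, F_{n-4}, \dots, F_3\}$ for odd $n\ge 5$ and $\{F_n, F_{n-2}, F_{n-4}, \dots, F_4\}$ for even $n\ge 6$, matching the statement (the stated ranges $k\ge 2$ for odd and $k\ge 3$ for even correspond precisely to $n\ge 5$ and $n\ge 6$).

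The main obstacle is the claim that $F_{n-2}$ genuinely covers $F_n$, i.e. that the occurrences of $F_{n-2}$ guaranteed by Lemma~\ref{lem Fm in Fn} leave no gap. One must be careful that the expansion $F_n = F_{n-2}F_{n-3}F_{n-2}$ places an occurrence of $F_{n-2}$ as both a prefix and a suffix, and that the overlap between consecutive occurrences is nonnegative — equivalently that $2|F_{n-2}| \ge |F_n| = |F_{n-2}| + |F_{n-3}|$, which holds since $|F_{n-2}|\ge|F_{n-3}|$. Once the positions of all occurrences of $F_{n-2}$ are pinned down via Lemma~\ref{lem Fm in Fn}, checking coverage is a short interval-covering argument; everything else is bookkeeping on the Fibonacci indices.
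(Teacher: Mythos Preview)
Your proof is correct and follows essentially the same route as the paper: identify $F_{n-2}$ as a cover via the expansion $F_n=F_{n-2}F_{n-3}F_{n-2}=F_{n-2}F_{n-2}F_{n-5}F_{n-4}$, observe it is the longest proper border (Lemma~\ref{lem all borders Fib}) and hence the longest proper cover, then apply Lemma~\ref{lemma cover of a cover} recursively down to $F_3$ or $F_4$. The paper simply declares the coverage ``obvious'' from the two expansions rather than invoking Lemma~\ref{lem Fm in Fn} and an interval argument, but the substance is identical.
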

\begin{proof}
It is easy to see that the theorem holds for $n=\{0,1,2,3,4\}$.
Using the recurrence relation we can expand $F_{n}$ , $n\geq5$ , in the following two ways:\\
$F_{n}=F_{n-2}F_{n-3}F_{n-2}=F_{n-2}F_{n-2}F_{n-5}F_{n-4}$\\
It is now obvious that $F_{n-2}$ is a cover of $F_{n}$. By Lemma \ref{lem all borders Fib} $F_{n-2}$ is also the longest border of $F_{n}$ and therefore the second longest cover of $F_{n}$ (after $F_{n}$). Similarly $F_{n-4}$ is the longest cover of $F_{n-2}$, $F_{n-6}$ is the longest cover of $F_{n-4}$, etc. Hence by following Lemma \ref{lemma cover of a cover} we get the above result. \qed
\end{proof}

Identifying left seeds of a Fibonacci string $F_{n}$ is made possible for large $n$ by characterizing each possible left seed as a substring of the form $F_{m}x$, where $m\in\{3,\dots n-1\}$ and $x$ a possibly empty prefix of $F_{m-1}$. We then use the $F_{m},F_{m-1}$ expansion of $F_{n}$ along with Lemma \ref{lem invalid ls} and the following result follows.

\begin{theorem}\label{thm all ls Fib}
All left seeds of $F_{n}$ are:
\begin{itemize}
 \item $F_{n}$, if $n=\{0,1,2\}$
 \item $\{ab,aba\}$, if $n=3$
 \item $\{F_{n-1}x$: $x$ a possibly empty prefix of $F_{n-2}\}$ $\bigcup^{n-2}_{m=3}\{F_{m}x$: $x$ a possibly empty prefix of $F_{m-1}[1 \dd |F_{m-1}|-2]\}$, if $n\geq4$
\end{itemize}

\end{theorem}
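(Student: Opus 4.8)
The plan is to reduce to Lemma~\ref{lemma ls condition} and then exploit the block structure of the $F_m,F_{m-1}$ expansion. First I would record that $\mathrm{period}(F_n)=|F_{n-1}|$ for $n\ge3$: by Lemma~\ref{lem all borders Fib} the longest border of $F_n$ is $F_{n-2}$, so the period is $|F_n|-|F_{n-2}|=|F_{n-1}|$. After checking $n\in\{0,1,2,3\}$ directly, fix $n\ge4$. Every left seed is a prefix of $F_n$, and a prefix of length $\ell$ with $|F_m|\le\ell<|F_{m+1}|$ can be written uniquely as $F_mx$ with $x$ a proper prefix of $F_{m-1}$ (because $F_{m+1}=F_mF_{m-1}$ is a prefix of $F_n$), where $3\le m\le n$ and $m=n$ forces $x$ empty. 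The two remaining prefixes $F_1=a$ and $F_2=ab$ are handled by hand: $a$ covers nothing longer than itself, and $ab$ never covers position $3$ of $F_n$ (that letter is an $a$ not preceded by $ab$), so for $n\ge4$ neither covers a prefix of length $\ge|F_{n-1}|\ge3$ and neither is a left seed, matching their absence from the claimed set. So it suffices to decide, for each $m$ and $x$, whether $F_mx$ is a left seed and to see that the ``yes'' answers are exactly the two displayed families.

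For membership, the case $m=n-1$ (and $m=n$) is immediate: $z=F_{n-1}x$ is a prefix of $F_n$ of length $\ge|F_{n-1}|=\mathrm{period}(F_n)$, so it covers itself and Lemma~\ref{lemma ls condition} applies; this yields the first family. For $3\le m\le n-2$ and $|x|\le|F_{m-1}|-2$ I would use the $F_m,F_{m-1}$ expansion, whose block pattern is the Fibonacci word $F_{n-m+1}$ over $\{A,B\}$ with $A=F_m$ and $B=F_{m-1}$, together with Lemma~\ref{lem Fm in Fn}, which says that the occurrences of $F_m$ in $F_n$ start exactly at the block boundaries, minus the final block if it is a $B$. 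The key point is that $z=F_mx$ actually occurs at every such boundary: at an $A$-start or a $B$-start the text after the relevant $F_m$-occurrence begins with $F_m$, with $F_{m-1}$, or with the rotated block $F_{m-3}F_{m-2}$ (the suffix of an $A$-block trailing the $F_m$-occurrence located at a $B$-start), and by the classical identity that $F_{m-2}F_{m-3}$ and $F_{m-3}F_{m-2}$ agree on their first $|F_{m-1}|-2$ letters (a one-line induction, or read off from Lemma~\ref{lem expansion Fib}), a prefix $x$ of $F_{m-1}=F_{m-2}F_{m-3}$ of length $\le|F_{m-1}|-2$ is a prefix of all three. Consecutive boundaries lie at distance $|F_m|$ or $|F_{m-1}|$, both $\le|F_m|\le|z|$, so these occurrences chain up and cover the prefix of $F_n$ ending at the last letter covered by the occurrence at the start of the second-to-last block; since a Fibonacci word never ends in $aa$ (Lemma~\ref{lem expansion Fib}), the last two blocks are $AB$ or $BA$, and that prefix has length $|F_n|-|F_{m-1}|+|x|\ge|F_n|-|F_{m-1}|\ge|F_{n-1}|=\mathrm{period}(F_n)$. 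By Lemma~\ref{lemma ls condition}, $z$ is a left seed, giving the second family.

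For exclusion I must show that the prefixes $F_mx$ with $3\le m\le n-2$ and $|x|=|F_{m-1}|-1$, i.e. the prefixes $F_n[1\dd|F_{m+1}|-1]=F_{m+2}[1\dd|F_{m+1}|-1]$, are not left seeds. One route, matching the paper's use of Lemma~\ref{lem invalid ls}, is first to note the transfer principle that a left seed $z$ of $F_n$ with $|z|\le|F_k|$ (for $k\le n$) is a left seed of $F_k$ as well (the occurrences of $z$ that meet $[1,|F_k|]$ already cover a superstring of $F_k$ of the form $F_kv'$), and then to apply Lemma~\ref{lem invalid ls} to $F_{m+2}$ (legitimate since $m+2\ge5$) to see that $F_{m+2}[1\dd|F_{(m+2)-1}|-1]=z$ is not a left seed of $F_{m+2}$, hence not of $F_n$. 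A self-contained route, which is the one I would actually write out, reuses the occurrence analysis above: the same rotation identity now shows $x=F_{m-1}[1\dd|F_{m-1}|-1]$ is \emph{not} a prefix of $F_{m-3}F_{m-2}$ (they first differ at position $|F_{m-1}|-1$), so $z$ occurs only at $A$-starts; the occurrence at the first block reaches only position $|F_{m+1}|-1$, and no $A$-start lies in the window that would let an occurrence of $z$ reach position $|F_{m+1}|$, the last letter of the first $B$-block; hence $z$ covers no prefix of length $\ge|F_{m+1}|$, and since $m\le n-2$ gives $|F_{m+1}|\le|F_{n-1}|=\mathrm{period}(F_n)$, Lemma~\ref{lemma ls condition} shows $z$ is not a left seed. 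Combining the two directions with the parametrization gives exactly the claimed set.

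The step I expect to be delicate is the middle case of the membership direction: verifying that $z=F_mx$ really does occur at \emph{every} block boundary — the $B$-boundary sub-case, where both the rotation identity and the exact cutoff $|F_{m-1}|-2$ are needed, is easy to get slightly wrong — and then confirming that the resulting chain of occurrences reaches far enough, which forces one to treat the last two blocks of $F_n$ carefully rather than claiming (falsely, when $x\neq\varepsilon$) that $z$ covers all of $F_n$.
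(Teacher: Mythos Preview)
Your argument is correct and follows essentially the same route as the paper: reduce to Lemma~\ref{lemma ls condition}, parametrize prefixes as $F_mx$, use the $F_m,F_{m-1}$ expansion together with Lemma~\ref{lem Fm in Fn} and the rotation identity (equivalently Lemma~\ref{lem expansion Fib}) to show $F_mx$ with $|x|\le|F_{m-1}|-2$ occurs at every block boundary, and invoke Lemma~\ref{lem invalid ls} for the exclusion. The only organizational differences are that the paper first dispatches the prefixes $F_m$ themselves via Theorem~\ref{thm all covers Fib} rather than folding them into the block argument, and that you make explicit the transfer step (a left seed of $F_n$ of length $\le|F_{m+2}|$ is a left seed of $F_{m+2}$) that the paper leaves implicit when citing Lemma~\ref{lem invalid ls}.
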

\begin{proof}
It is easy to see that the theorem holds for $n=\{0,1,2,3,4\}$.
For even $n\geq 5$ by Theorem \ref{thm all covers Fib} $\{F_{n},F_{n-2},F_{n-4},\dots F_{4}\}$ are covers of $F_{n}$ and therefore left seeds of $F_{n}$. Again by Theorem \ref{thm all covers Fib} $\{F_{n-1},F_{n-3},F_{n-5},\dots F_{3}\}$ are all covers of $F_{n-1}$ which is the period of $F_{n}$  and hence by Lemma \ref{lemma ls condition} $\{F_{n},F_{n-1},F_{n-2},\dots F_{3}\}$ are left seeds of $F_{n}$. By making similar observations for odd $n\geq 5$ we get that for $\{F_{n},F_{n-1},F_{n-2},\dots F_{3}\}$ are all left seeds of $F_{n}$ in either case. Only $a$ and $ab$ might be shorter left seeds but they are rejected as they are not left seeds of $F_{4}$ and so they are not left seeds of any longer Fibonacci string ($F_{4}$ is a prefix of every other $F_{n}$, $n\geq5$). Therefore the remaining left seeds are of the form $F_{m}x$, where $m\in\{3,4,\dots,n-1\}$ and $0<|x|<|F_{m-1}|$.
Using the recurrence relation we can get the $F_{m},F_{m-1}$ expansion of $F_{n}$ ,for any $m\in\{3,4,\dots,n-1\}$, as shown before:\\
$F_{n}=F_{m}F_{m-1}F_{m}F_{m}F_{m-1}F_{m}F_{m-1}F_{m}\dots$\\
We then try to expand the seed from each $F_{m},F_{m-1}$ in the above expansion as of Lemma \ref{lem Fm in Fn} (note that there are no consecutive $F_{m-1}$ in the above expansion).\\
$F_{m}F_{m-1}=F_{m}P_{m-1}\delta_{m-1}$\\
$F_{m}F_{m}=F_{m}F_{m-1}F_{m-2}=F_{m}P_{m-1}\delta_{m-1}F_{m-2}$\\
$F_{m-1}F_{m}=F_{m-1}F_{m-2}F_{m-3}F_{m-2}=F_{m}F_{m-3}P_{m-2}\delta_{m-2}=F_{m}P_{m-1}\delta_{m-2}$\\
It is now obvious that any $F_{m}x$, where $m\in\{3,4,\dots,n-1\}$ and $0<|x|\leq|F_{m-1}|-2$ is a left seed of $F{n}$. $F_{n-1}F_{n-2}[1\dd|F_{n-2}|-1]$ is the only other left seed as it covers the period of $F_{n}$ (Lemma \ref{lemma ls condition}). 
That there are no left seeds of form $F_{m}x$, where $m\in\{3,4,\dots,n-2\}$ and $|x|=|F_{m-1}|-1$ follows from Lemma \ref{lem invalid ls}. \qed

\end{proof}

Identifying right seeds of a Fibonacci string $F_{n}$ is made possible for large $n$ by characterizing each possible right seed as a substring of the form $xF_{m}$, where $m\in\{3,5,\dots n-2\}$ if $n$ is odd or $m\in\{4,6,\dots n-2\}$ if $n$ is even, and $x$ is a possibly empty suffix of $F_{m+1}$. We then use the $F_{m},F_{m-1}$ expansion of $F_{n}$ along with Lemma \ref{lem invalid rs} and the following result follows.

\begin{theorem}\label{thm all rs Fib}
All right seeds of $F_{n}$ are:
\begin{itemize}
 \item $F_{n}$, if $n=\{0,1,2\}$
 \item $\{F_{n},F_{n-2},F_{n-4},F_{n-6},\dots F_{3}\}$ $\cup$ $\{xF_{n-3}F_{n-2}$: $x$ a possibly empty suffix of $F_{n-2}\}$, if $n=2k+1$, $k\geq1$
 \item $\{F_{n},F_{n-2},F_{n-4},F_{n-6},\dots F_{4}\}$ $\cup$ $\{xF_{n-3}F_{n-2}$: $x$ a possibly empty suffix of $F_{n-2}\}$, if $n=2k$, $k\geq2$
\end{itemize}

\end{theorem}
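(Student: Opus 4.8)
The plan is to mirror the structure of the proof of Theorem~\ref{thm all ls Fib}, working with suffixes instead of prefixes. First I would dispose of the small cases $n\in\{0,1,2,3,4\}$ by direct inspection. For $n\geq5$, the starting observation is that any cover of $F_n$ is in particular a right seed of $F_n$; by Theorem~\ref{thm all covers Fib} this already gives the set $\{F_n,F_{n-2},F_{n-4},\dots\}$ (down to $F_3$ or $F_4$ according to parity) as right seeds. Next, by Lemma~\ref{lemma rs condition}, a suffix of $F_n$ is a right seed iff it covers a suffix of $F_n$ of length at least $\mathrm{period}(F_n)=|F_{n-1}|$. Since the longest proper border of $F_n$ is $F_{n-2}$ (Lemma~\ref{lem all borders Fib}), we have $\mathrm{period}(F_n)=|F_n|-|F_{n-2}|=|F_{n-1}|$, so the suffix of $F_n$ of length $|F_{n-1}|$ is exactly $F_{n-3}F_{n-2}$ (using $F_{n-1}=F_{n-2}F_{n-3}$ and comparing the suffix of $F_{n-2}F_{n-3}F_{n-2}$). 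Hence the right seeds of $F_n$ that are longer than $F_{n-2}$ are precisely the suffixes of $F_n$ that cover the suffix $F_{n-3}F_{n-2}$, and since a cover must itself be a suffix of what it covers, each such right seed has the form $xF_{n-3}F_{n-2}$ with $x$ a (possibly empty) suffix of $F_{n-2}$; every such $x$ works because $F_{n-3}F_{n-2}$ occurs as a suffix of $F_n$ and also shifted so that it starts at the end of the leading $F_{n-2}$, and these two occurrences overlap enough to cover the whole suffix $F_{n-3}F_{n-2}$. This yields the family $\{xF_{n-3}F_{n-2} : x \text{ a suffix of }F_{n-2}\}$.

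It remains to show there are no other right seeds, i.e.\ no right seed of length strictly between $|F_{n-2}|$ (exclusive, except for $F_{n-2}$ itself which is already listed) and $|F_{n-3}F_{n-2}|=|F_{n-1}|$, and no short right seed below the covers list. For the short end, note $F_3=aba$ is a cover hence a right seed, and $a$, $ab$, $ba$, $aab$ are easily checked not to be right seeds of $F_4$ (hence not of any $F_n$, $n\geq4$, since $F_4$'s letters would have to be covered and the relevant suffix structure fails), so nothing below the covers list survives. For the middle range: a right seed $z$ with $|F_{n-2}|<|z|<|F_{n-1}|$ is, by Lemma~\ref{lemma rs condition}, a cover of the suffix $F_{n-3}F_{n-2}$ of $F_n$; by Lemma~\ref{lemma cover of a cover} applied with the proper cover $F_{n-3}F_{n-2}$ of (a prefix-extension of) itself — more carefully, since $F_{n-2}$ is a cover of $F_{n-3}F_{n-2}$ and $|z|>|F_{n-2}|$ — $z$ must be a substring of $F_{n-3}F_{n-2}$ ending at its end, of length $>|F_{n-2}|$; writing such a $z$ in the form $xF_{m}$ as suggested in the run-up to the theorem, with $m$ of the appropriate parity and $x$ a proper nonempty suffix of $F_{m+1}$, the only case not already covered by the family above is $z=xF_{n-4}$ with $0<|x|<|F_{n-3}|$, which is ruled out by Lemma~\ref{lem invalid rs}. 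The reduction to the single residual case $xF_{n-4}$ is the step that needs care: one argues by descending through the $F_m,F_{m-1}$ expansions (Lemma~\ref{lem Fm in Fn}) exactly as in Theorem~\ref{thm all ls Fib}, observing that any candidate right seed $xF_m$ that is not of the listed form can be extended on the left only at occurrences of $F_m$ whose left neighbour is another $F_m$ (the neighbours that are $F_{m-1}$ block extension by Lemma~\ref{lem expansion Fib}), and that this forces $m=n-4$ up to the already-handled possibilities, and then invokes Lemma~\ref{lem invalid rs}.

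I expect the main obstacle to be precisely this last classification: making rigorous the claim that every right seed of length between $|F_{n-2}|$ and $|F_{n-1}|$ reduces, via the $F_m,F_{m-1}$ expansion and Lemma~\ref{lem Fm in Fn}, to either a member of the family $\{xF_{n-3}F_{n-2}\}$ or a forbidden string $xF_{n-4}$ with $0<|x|<|F_{n-3}|$. Concretely one must show that the occurrences of a candidate cover $z$ inside $F_n$, restricted to the suffix $F_{n-3}F_{n-2}$, cannot "tile" that suffix unless $z$ has one of the two forms, and the book-keeping of which occurrences of the Fibonacci block admit a left extension (controlled by $\delta_k=ab$ vs $ba$ of Lemma~\ref{lem expansion Fib}) is the delicate part. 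Everything else — the small cases, the derivation of the period, the identification of the suffix of length $|F_{n-1}|$ as $F_{n-3}F_{n-2}$, and the verification that every $xF_{n-3}F_{n-2}$ is genuinely a right seed — is routine and parallels the left-seed argument, so I would present those briskly and spend the detail on the exclusion step, closing the proof with the citation of Lemma~\ref{lem invalid rs}. \qed
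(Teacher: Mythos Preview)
Your overall plan mirrors the paper's, but the exclusion step is mis-indexed and this creates a real gap. A candidate right seed of length strictly between $|F_{n-2}|$ and $|F_{n-1}|$ is a suffix of $F_n$ of the form $xF_{n-2}$ with $x$ a nonempty proper suffix of $F_{n-3}$, \emph{not} $xF_{n-4}$ as you write. Lemma~\ref{lem invalid rs} is stated for $xF_{n-4}$ in $F_n$ and does not apply to $xF_{n-2}$ (and you cannot rescue it by shifting $n\mapsto n+2$, since ``not a right seed of $F_{n+2}$'' does not imply ``not a right seed of $F_n$''; the implication goes the other way). The paper disposes of $xF_{n-2}$ by a direct three-occurrence argument: in $F_n=F_{n-2}F_{n-3}F_{n-2}=F_{n-2}F_{n-2}F_{n-5}F_{n-4}$ the middle copy of $F_{n-2}$ has an $F_{n-2}$ to its left, whose last two letters differ from those of $F_{n-3}$ (Lemma~\ref{lem expansion Fib}), so that occurrence cannot be extended leftward to match $xF_{n-2}$; the remaining two occurrences then leave position $|F_{n-2}|+1$ uncovered.

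Separately, you only name the single gap $(|F_{n-2}|,|F_{n-1}|)$ and ``below the covers list'', but there is a gap $(|F_m|,|F_{m+2}|)$ between every pair of consecutive covers, i.e.\ candidates $xF_m$ with $0<|x|<|F_{m+1}|$ for each $m\in\{n-4,n-6,\dots\}$ down to $3$ or $4$. These are exactly the cases to which Lemma~\ref{lem invalid rs} applies: for each such $m$ one invokes the lemma with $n$ replaced by $m+4$ to conclude $xF_m$ is not a right seed of $F_{m+4}$, and since $F_{m+4}$ is a suffix of $F_n$, any right seed of $F_n$ of that length would also be a right seed of $F_{m+4}$, a contradiction. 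Your sketch ``this forces $m=n-4$'' is not what happens; rather, \emph{every} $m\le n-4$ is handled by Lemma~\ref{lem invalid rs}, and only $m=n-2$ needs the separate direct argument above. (Two minor points: $aab$ \emph{is} a right seed of $F_4$, since it covers itself, a suffix of length $3=\mathrm{period}(F_4)$; and your verification that every $xF_{n-3}F_{n-2}$ is a right seed is simpler than you make it---any suffix of $F_n$ of length $\ge|F_{n-1}|$ covers itself, which already satisfies Lemma~\ref{lemma rs condition}.)
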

\begin{proof}
It is easy to see that the theorem holds for $n=\{0,1,2,3,4\}$.
For even $n\geq 5$, by Theorem \ref{thm all covers Fib} $\{F_{n},F_{n-2},F_{n-4},\dots F_{4}\}$ are covers of $F_{n}$ and therefore right seeds of $F_{n}$. Only $\{baab,aab,ab,b\}$ might be shorter right seeds but they are rejected as they are not right seeds of $F_{6}$ and so they are not right seeds of any $F_{n}$, where $n$ even and $n\geq5$ ($F_{6}$ is a suffix of every other $F_{n}$, $n$ even and $n\geq5$). 
Similarly for odd $n\geq5$ $\{F_{n},F_{n-2},F_{n-4},\dots F_{3}\}$ are right seeds of $F_{n}$ and $F_{3}$ is its shortest right seed.
Therefore the remaining right seeds are of the form $xF_{m}$, where $0<|x|<|F_{m+1}|$ and $m\in\{4,6,\dots,n-2\}$, if $n$ is even, or $m\in\{3,5,\dots,n-2\}$, otherwise.\\
The only other right seeds  are of the form $xF_{n-3}F_{n-2}$ ,where $x$ is a suffix of $F_{n-2}$ and $0\leq|x|<F_{n}$, as it is easy to see that they cover the period of $F_{n}$ (Lemma \ref{lemma rs condition}). \\
The fact that there are no right seeds of form $xF_{n-2}$, where $0<|x|<|F_{n-3}|$, follows from Lemma \ref{lem Fm in Fn}. Clearly the middle occurrence of $F_{n-2}$ cannot be expanded to the left as an $F_{n-2}$ is to its left, which has a different ending than that of $F_{n-3}$ at the left of the last $F_{n-2}$. Then $F_{n}[|F_{n-2}|+1]$ can not be covered by the expanding the other $2$ occurrences of $F_{n-2}$ in $F_{n}$.\\
The fact that there are no right seeds of of the form $xF_{m}$, where $0<|x|<|F_{m+1}|$ and $m\in\{4,6,\dots,n-4\}$, $n$ is even, or $m\in\{3,5,\dots,n-4\}$, otherwise, follows from Lemma \ref{lem invalid rs}.  \qed
\end{proof}

Identifying all seeds of a Fibonacci string $F_{n}$ is made possible for large $n$ by characterizing each possible seed as a substring of the form $xF_{m}y$, where $m\in\{3,4,\dots n-1\}$ and $x$, $y$ follow some restrictions such that $F_{m}$ is the longest Fibonacci substring in the seed and no occurrence of $F_{m}$ in the seed starts from a position in $x$. We then use the $F_{m},F_{m-1}$ expansion of $F_{n}$ along with Lemma \ref{lem Fm in Fn} and the result below follows.

\begin{theorem}\label{thm all seeds Fib}
All seeds of $F_{n}$ are:
\begin{itemize}
 \item all left/right seeds of $F_{n}$, if $n=\{0,1,2,3\}$
 \item all left/right seeds of $F_{n}$ and $baa$, if $n=4$
 \item all left/right seeds of $F_{n}$,\\
 strings of form $\{xF_{m}y$: $x$ a suffix of $F_{m}$,$y$ a prefix of $F_{m-1}$,$0<|x|<|F_{m}|$,$0<|y|<|F_{m-1}|-1$,$|x|+|y|\geq F_{m-1}$ and $m\in\{3,\dots,n-3\}\}$,\\
 strings of form $\{xF_{m-1}F{m}y$: $x$ a suffix of $F_{m}$,$y$ a prefix of $F_{m-1}$,$|x|+|y|\geq F_{m}$ and $m\in\{3,\dots,n-3\}\}$,\\
 strings of form $\{xF_{n-2}y$: $x$ a suffix of $F_{n-2}$,$y$ a prefix of $F_{n-5}F_{n-4}$,$0<|x|<|F_{n-2}|$,$0 < |y| \leq |F_{n-3}|$ and $|x|+|y| \geq |F_{n-3}|\}$, if $n\geq5$
\end{itemize}

\end{theorem}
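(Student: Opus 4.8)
The plan is to classify every seed $s$ of $F_n$ — recall we only look for seeds that are factors of $F_n$ — according to the length of the longest Fibonacci string occurring in $s$. The easy observations first: every left seed covers the superstring $F_nv$ and every right seed covers $vF_n$, so all the strings produced by Theorems~\ref{thm all ls Fib} and~\ref{thm all rs Fib} are seeds; together with a direct check for $n\le4$ (which also produces the isolated string $baa$ for $F_4$) this disposes of those lines of the statement, and from now on $n\ge5$. If $s$ is not one of the already–listed short seeds, let $F_m$ be the longest Fibonacci string occurring in $s$; one shows $3\le m\le n-1$, and the extreme case $m=n-1$ is immediate, since $F_{n-1}$ occurs in $F_n=F_{n-1}F_{n-2}$ only as the prefix, so such an $s$ is a prefix of $F_n$ and has already been counted among the left seeds. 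This leaves the generic range $3\le m\le n-3$ and the boundary case $m=n-2$.

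For $3\le m\le n-3$, write $s=xF_my$ by taking the leftmost occurrence of $F_m$ in $s$, so that no occurrence of $F_m$ begins inside $x$. I would then combine the $F_m,F_{m-1}$–expansion $F_n=F_mF_{m-1}F_mF_mF_{m-1}F_mF_{m-1}F_m\cdots$ with Lemma~\ref{lem Fm in Fn}: the occurrences of $F_m$ in $F_n$ start precisely at the block boundaries of this expansion, which pins down the local context of each such occurrence (using the identities $F_mF_m=F_mP_{m-1}\delta_{m-1}F_{m-2}$ and $F_{m-1}F_m=F_mP_{m-1}\delta_{m-2}$ from the proof of Theorem~\ref{thm all ls Fib}) and forces $s$ into one of the two shapes $xF_my$ or $xF_{m-1}F_my$, with $x$ a suffix of $F_m$ and $y$ a prefix of $F_{m-1}$. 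The heart of the argument is then to decide when such an $s$ covers a suitable superstring $uF_nv$. A translate of $s$ can be placed only at those occurrences of $F_m$ whose left context of length $|x|$ equals $x$; because $F_m$ and $F_{m-1}$ have empty longest common suffix (Lemma~\ref{lem expansion Fib}, $\delta_m\neq\delta_{m-1}$), the occurrences of $F_m$ directly preceded by an $F_{m-1}$–block are never usable, while all the others are, and $y$ being a prefix of $F_{m-1}$ makes the right context always match. Chasing these admissible anchors through the block pattern one finds that consecutive ones are never farther apart than $|F_{m+1}|=|F_m|+|F_{m-1}|$ (this maximal gap being realised by a consecutive $F_{m-1}$–block, $F_m$–block pair); since a gap of length $g$ between consecutive translates is tiled without a hole exactly when $|s|=|x|+|F_m|+|y|\ge g$, and the two ends of $F_n$ are absorbed by taking $u=x$ (so that $s$ is a prefix of $xF_n$) and a suitable $v$, one obtains the inequality $|x|+|y|\ge|F_{m-1}|$ in the one–block shape and $|x|+|y|\ge|F_m|$ in the two–block shape. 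The side conditions $0<|x|<|F_m|$ and $0<|y|<|F_{m-1}|-1$ are exactly what keeps $F_m$ the longest Fibonacci factor and keeps the representation canonical; the borderline value $|y|=|F_{m-1}|-1$ must be thrown out, which is the seed–counterpart of the fact (Lemma~\ref{lem invalid ls}) that $F_{m+1}[1\dd|F_{m+1}|-1]$ is not a left seed, and the remaining candidates are excluded exactly as in Lemmas~\ref{lem invalid ls} and~\ref{lem invalid rs}, by producing a letter of $uF_nv$ contained in no translate of $s$.

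The boundary case $m=n-2$ is handled by the same tiling argument, run on the two expansions $F_n=F_{n-2}F_{n-3}F_{n-2}=F_{n-2}F_{n-2}F_{n-5}F_{n-4}$: now $F_{n-2}$ has only the three occurrences in $F_n$ given by Lemma~\ref{lem Fm in Fn}, only the first two of them are usable anchors, the stretch of $F_n$ following the second anchor is $F_{n-5}F_{n-4}$ (of length $|F_{n-3}|$), and consecutive admissible anchors sit at distance $|F_{n-3}|$; this produces the third family, with $y$ a prefix of $F_{n-5}F_{n-4}$, $0<|x|<|F_{n-2}|$, $0<|y|\le|F_{n-3}|$ and $|x|+|y|\ge|F_{n-3}|$.

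The step I expect to fight hardest with is the anchor–and–overlap analysis in the middle paragraph: proving that the admissible anchors of $s$ are exactly those claimed, that the largest gap between consecutive admissible anchors is exactly $|F_{m+1}|$ (so the threshold is $|F_{m-1}|$, not one more), that the two shapes $xF_my$ and $xF_{m-1}F_my$ really cover every context of $F_m$ in $F_n$ without the resulting families overlapping, and — the most delicate point — that $|y|=|F_{m-1}|-1$ genuinely fails to give a seed, which, as with Lemmas~\ref{lem invalid ls} and~\ref{lem invalid rs}, comes down to locating a single uncoverable letter with the help of the border structure of Lemma~\ref{lem all borders Fib}.
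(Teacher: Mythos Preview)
Your plan is essentially the paper's own argument: classify any remaining seed by the longest Fibonacci factor $F_m$ it contains, place the leftmost such $F_m$ at a block boundary of the $F_m,F_{m-1}$ expansion via Lemma~\ref{lem Fm in Fn}, and then do an anchor--and--gap analysis. The paper organises the endgame as a case split on the block immediately to the left of that leftmost $F_m$, while you phrase it in terms of admissible anchors and maximal gaps, but the content is the same and leads to the same inequalities $|x|+|y|\ge|F_{m-1}|$ and $|x|+|y|\ge|F_m|$.

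There is, however, one case your outline skips. When you write ``forces $s$ into one of the two shapes $xF_my$ or $xF_{m-1}F_my$, with $x$ a suffix of $F_m$'', you have implicitly assumed that the block to the left of the leftmost $F_m$ in $s$ is an $F_m$--block. But it can equally well be an $F_{m-1}$--block, in which case $x$ is a proper suffix of $F_{m-1}$ (with $0<|x|<|F_{m-1}|$, else a new leftmost $F_m$ appears). The paper treats this as a separate first case and shows that no such string is a seed: the only anchors compatible with a left context in $F_{m-1}$ are the $F_m$--blocks preceded by an $F_{m-1}$--block, and these are too sparse, leaving the letter $F_n[\,|F_mF_{m-1}F_mF_{m-1}|-1\,]$ uncovered. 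You need this elimination before you can assert that $x$ is a suffix of $F_m$; your ``remaining candidates are excluded as in Lemmas~\ref{lem invalid ls} and~\ref{lem invalid rs}'' does not obviously cover it. A second, smaller point: your claim that ``$y$ being a prefix of $F_{m-1}$ makes the right context always match'' is only true for the $F_{m-1}$--block anchors when $|y|\le|F_{m-1}|-2$ (since $F_{m-1}F_m=F_mP_{m-1}\delta_{m-2}$ while $F_mF_{m-1}=F_mP_{m-1}\delta_{m-1}$), so the bound on $|y|$ should be established before, not after, the gap computation. Once those two points are tightened, your argument coincides with the paper's.
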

\begin{proof}
It is easy to see that the theorem holds for $n=\{0,1,2,3,4\}$.
For $n\geq 5$ it is obvious that all left seeds of $F_{n}$ and all right seeds of $F_{n}$ are also seeds of $F_{n}$.\\
Therefore the remaining seeds are of the form $xF_{m}y$, such that $F_{m}$ is the leftmost occurrence of the longest Fibonacci word present in the seed, $m\in\{3,4,\dots,n-2\}$, $|x|>0$ and $|y|>0$.\\
For $m=n-2$ the expansion of $F_{n}=F_{n-2}F_{n-3}F_{n-2}=F_{n-2}F_{n-2}F_{n-5}F_{n-4}$ is very small so we consider it separately. 
By expanding the middle occurrence of $F_{n-2}$ we get the seed $xF_{n-2}y$, where $0<|x|<|F_{n-2}|$, $0<|y|\leq|F_{n-3}|$ and $|x|+|y|\geq |F_{n-3}|$.
As of lemma \ref{lem Fm in Fn} the remaining seeds of form $xF_{m}y$, such that $F_{m}$ is the leftmost occurrence of the longest Fibonacci word present in the seed, $m\in\{3,4,\dots,n-3\}$, $|x|>0$ and $|y|>0$, have their leftmost $F_{m}$ factor occurring in the start position of either an $F_{m}$ or an $F_{m-1}$ in the $F_{m},F_{m-1}$ expansion of $F_{n}=F_{m}F_{m-1}F_{m}F_{m}F_{m-1}F_{m}F_{m-1}F_{m}\dots$. We consider the following cases (note that there are no consecutive $F_{m-1}$ in the above expansion):
\begin{itemize}
\item A seed of form $xF_{m}y$, such that $F_{m}$ has a $F_{m-1}$ to its left in the $F_{m},F_{m-1}$ expansion of $F_{n}$ and $0<|x|<F_{m-1}$ (otherwise there exist a new leftmost occurrence of $F_{m}$ in the seed). The occurrences of $F_{m}$ that we are considering have starting positions only from a $F_{m}$ in the expansion of $F_{n}$, then $y$ can be up to $F_{m-1}[1\dd |F_{m-1}|-1]$ (otherwise a $F_{m+1}$ is created). But such a seed fails to cover $F_{n}[|F_{m}F_{m-1}F_{m}F_{m-1}|-1]$. 
\item A seed of form $xF_{m}y$, such that $F_{m}$ has a $F_{m}$ to its left in the $F_{m},F_{m-1}$ expansion of $F_{n}$ and $0<|x|<F_{m}$ (otherwise a $F_{m+1}$ is created). If the occurrences of $F_{m}$ that we are considering have starting positions both from a $F_{m}$ and a $F_{m-1}$ in the expansion of $F_{n}$, then $y$ can be up to $F_{m-1}[1\dd |F_{m-1}|-2]$ (otherwise the factors differ). Furthermore $|x|+|y|\geq |F_{m-1}|$ ,such as to cover $F_{n}[|F_{m}F_{m-1}|+1\dd|F_{m}F_{m-1}F_{m}|]$. Such a seed covers $F_{n}$ as $F_{m+2}=F_{m}F_{m-1}F_{m}=F_{m}F_{m}P_{m-1}\delta_{m-2}$ is a left seed of $F_{n}$ (Theorem \ref{thm all ls Fib}) composing $F_{n}$ with concatenations of overlap $0$ (factors are joined by considering the seed that its leftmost $F_{m}$ starts from the next $F_{m+2}$) or $F_{m}$ (factors are joined as $|x|+|y|\geq F_{m-1}$). 
If the occurrences of $F_{m}$ that we are considering have starting positions only from a $F_{m}$ in the expansion of $F_{n}$, then $y$ can be up to $F_{m-1}[1\dd |F_{m-1}|-1]$ (otherwise a $F_{m+1}$ is created). But such a seed fails to cover $F_{n}[|F_{m}F_{m-1}|]$. 
If the occurrences of $F_{m}$ that we are considering have starting positions only from a $F_{m-1}$ in the expansion of $F_{n}$, then $|y|$ can be up to $2|F_{m-1}|-1]$ (otherwise a $F_{m+1}$ is created).  Furthermore $|x|+|y|\geq |F_{m}|+|F_{m-1}|=|F_{m+1}|$ ,such as to cover $F_{n}[|F_{m}F_{m-1}|+1\dd|F_{m}F_{m-1}F_{m}F_{m}|]$. Such a seed covers $F_{n}$ as $F_{m+2}=F_{m}F_{m-1}F_{m}=F_{m}F_{m}P_{m-1}\delta_{m-2}$ is a left seed of $F_{n}$ (Theorem \ref{thm all ls Fib}) composing $F_{n}$ with concatenations of overlap $0$ (factors are joined as $|x|+|y|\geq |F_{m+1}|$) or $F_{m}$ (factors are joined as $|x|+|y|\geq |F_{m+1}|>|F_{m-1}|$). \qed
\end{itemize}

\end{proof}

Finding all covers of a circular Fibonacci string is now obvious, we just need to check the seeds of the relevant Fibonacci string. Those which are covers of a superstring of form  $xF_{n}y$, where $x$ is a possibly empty suffix of $F_{n}$ and $y$ is a possibly empty prefix of $F_{n}$ are covers of $C(F_{n})$.

\begin{theorem}\label{thm all covers CF}
All covers of $C(F_{n})$ are:
\begin{itemize}
 \item $F_{n}$, if $n=\{0,1,2,3\}$
 \item $F_{n}$ and $F_{n-1}$, if $n=4$
 \item $F_{n}$, strings of form $\{F_{m}x$: $x$ a possibly empty prefix of $F_{m-1}[1 \dd |F_{m-1}|-2]$ and $m\in\{3,\dots,n-1\}\}$,\\
 strings of form $\{xF_{m}y$: $x$ a suffix of $F_{m}$,$y$ a prefix of $F_{m-1}$,$0<|x|<|F_{m}|$,$0<|y|<|F_{m-1}|-1$,$|x|+|y|\geq F_{m-1}$ and $m\in\{3,\dots,n-2\}\}$,\\
 strings of form $\{xF_{m-1}F{m}y$: $x$ a suffix of $F_{m}$,$y$ a prefix of $F_{m-1}$,$|x|+|y|\geq F_{m}$ and $m\in\{3,\dots,n-3\}\}$, if $n\geq5$

\end{itemize}

\end{theorem}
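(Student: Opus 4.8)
The plan is to reduce the statement to Theorem~\ref{thm all seeds Fib} by making precise the remark preceding it: a factor $u$ of $C(F_n)$ (so $|u|<|F_n|$, together with $u=F_n$ itself) is a cover of $C(F_n)$ if and only if $u$ is a seed of $F_n$ that is witnessed by a superstring of the \emph{symmetric} shape $xF_ny$, with $x$ a possibly empty suffix of $F_n$ and $y$ a possibly empty prefix of $F_n$. The forward implication is immediate, since $C(F_n)$ is a prefix of the periodic word $F_nF_nF_n\cdots$. For the converse I would use periodicity: if $u$ covers some $xF_ny$ of this shape, then $xF_ny$ occurs in $F_nF_nF_n\cdots$, and each occurrence of $u$ inside it may be translated by a multiple of $|F_n|$ to meet the first period; the translated occurrences still cover every position, so $u$ is a wrap-around cover of $C(F_n)$. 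Conversely, a seed of $F_n$ all of whose witnessing superstrings carry a heavy one-sided context cannot be brought into the symmetric form, and so must be discarded. Thus the proof reduces to a walk through the seed families of Theorem~\ref{thm all seeds Fib}, deciding in each case whether the symmetric form is attainable.

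With the reduction in hand I would first dispose of $n\in\{0,1,2,3,4\}$ by direct inspection (these words are tiny; for $n=4$ one checks that $F_3$ becomes a wrap-around cover, while nothing shorter does). For $n\ge5$, $F_n$ itself is trivially a cover. Among the left and right seeds of $F_n$, exactly the left-seed family $F_mx$ survives, with admissible range $m\in\{3,\dots,n-1\}$ and $|x|\le|F_{m-1}|-2$: the short right context $x$ is precisely what makes the occurrence straddling the junction of $C(F_n)$ legitimate, while the bound $|F_{m-1}|-2$ is forced by the circular analogue of Lemma~\ref{lem invalid ls}. In particular the longer left seeds $F_{n-1}x$ with $|x|\in\{|F_{n-2}|-1,|F_{n-2}|\}$, and the right seeds $xF_{n-3}F_{n-2}$ with $0\le|x|<|F_{n-2}|$, are seeds of $F_n$ that fail the test: prepending a nonempty suffix of $F_n$ (as the symmetric form demands) conflicts, via Lemma~\ref{lem expansion Fib}, with the context these seeds require, and no translated occurrence can repair the position just past the wrapped copy of $F_n$.

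For the internal families $xF_my$ and $xF_{m-1}F_my$ --- the seeds in which $F_m$ is the leftmost occurrence of the longest Fibonacci factor --- the covering argument of Theorem~\ref{thm all seeds Fib}, driven by Lemma~\ref{lem Fm in Fn} and the $F_m,F_{m-1}$ expansion of $F_n$, carries over essentially verbatim, since the interior of $C(F_n)$ is covered exactly as it is in $F_n$. The one genuine change, which I expect to be the main obstacle, is the index $m=n-2$ of the first family: in $F_n$ alone the expansion $F_n=F_{n-2}F_{n-3}F_{n-2}=F_{n-2}F_{n-2}F_{n-5}F_{n-4}$ is too cramped to support $xF_{n-2}y$ as an ordinary seed except through the awkward entry with $y$ a prefix of $F_{n-5}F_{n-4}$, whereas reading the same expansion circularly provides an extra occurrence of $F_{n-2}$ in a fresh right context, after which $xF_{n-2}y$ with $y$ a prefix of $F_{n-3}$, $0<|x|<|F_{n-2}|$, $0<|y|<|F_{n-3}|-1$ and $|x|+|y|\ge|F_{n-3}|$ does close up. Pinning this down requires re-running the occurrence analysis of Lemma~\ref{lem Fm in Fn} for $F_{n-2}$ inside $F_nF_n$ while tracking both expansions of $F_n$ at once, and being careful with the off-by-one bounds on $|x|$, $|y|$ and $m$. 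Once the boundary index is settled, assembling the three families together with $F_n$ and the small cases yields the stated list.
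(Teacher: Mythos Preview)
Your proposal is correct and follows essentially the same route as the paper: reduce to the seed classification of Theorem~\ref{thm all seeds Fib}, characterise covers of $C(F_n)$ as exactly those seeds witnessed by a superstring $xF_ny$ with $x$ a suffix and $y$ a prefix of $F_n$, and then walk through the seed families to decide which survive. The paper's proof is organised identically, including the special handling of the $m=n-2$ family (where it shows $xF_{n-2}y$ covers $xF_nF_{n-2}y$ for short $y$, but fails when $|y|\in\{|F_{n-3}|-1,|F_{n-3}|\}$) and the rejection of the non-cover right seeds $xF_{n-3}F_{n-2}$.

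One small slip: you list $F_{n-1}x$ with $|x|=|F_{n-2}|$ among the left seeds that ``fail the test'', but that string is $F_n$ itself, which you already (correctly) declared a cover. The genuine exclusion at the top end is only $|x|=|F_{n-2}|-1$, i.e.\ $F_n[1\dd|F_n|-1]$; the paper argues this case by observing it cannot cover any prefix of $F_nF_n$ longer than $|F_n|-1$ via Lemma~\ref{lem Fm in Fn}.
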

\begin{proof}
It is easy to see that the theorem holds for $n=\{0,1,2,3,4\}$.
For larger $n$ the covers of $C(F_{n})$ are at most the seeds of $F_{n}$. A seed is a cover of $C(F_{n})$ iff it covers a superstring of $F_{n}$ of form $xF_{n}y$, where $x$ is a possibly empty suffix of $F_{n}$ and $y$ is a possibly empty prefix of $F_{n}$. We consider the following cases:
\begin{itemize}
\item Left seeds of form $F_{n}[1\dd |F_{k}|+i]$, where $i\in\{0,1,\dots,|F_{k-1}|-2\}$ and $k\in \{3,4,\dots n-1 \}$, are covers of $F_{n}F_{k}[1\dd i]$ ,if $F_{k}$ is a cover of $F_{n}$, or covers of $F_{n}F_{k}[1\dd |F_{k-2}|+i]$ otherwise, and hence covers of $C(F_{n})$ in both cases. Clearly $F_{n}$ is also a cover of $C(F_{n})$. $F_{n}[1\dd|F_{n}|-1]$ is not a cover of $C(F_{n})$ as it fails to cover a prefix of $F_{n}F_{n}$ longer than $|F_{n}|-1$ (consider the $F_{n-1},F_{n-2}$ expansion of $F_{n}$ along with Lemma \ref{lem Fm in Fn}).
\item The only right seeds of $F_{n}$ that are covers of $C(F_{n})$ are the covers of $F_{n}$ (included above). Right seeds of form $xF_{n-3}F{n-2}$, where $x$ is a possibly empty suffix of $F_{n-2}$ and $0\leq |x| <|F_{n-2}|$, fail to cover a suffix of $F_{n}F_{n}=F{n-2}F_{n-3}F{n-2}$ longer than $|xF_{n-3}F_{n-2}|$ (consider the $F_{n-2},F_{n-3}$ expansion of $F_{n}$ along with Lemma \ref{lem Fm in Fn}), and so they are not covers of $C(F_{n})$.
\item Seeds of form $xF_{m}y$ where $x$ a suffix of $F_{m}$ and $y$ a prefix of $F_{m-1}$, $0<|x|<|F_{m}|$, $0<|y|<|F_{m-1}|-1$, $|x|+|y|\geq F_{m}$ and $m\in\{3,4,\dots,n-3\}$ are covers of $xF_{n}F_{m}y$ ,if $F_{m}$ is a cover of $F_{n}$, or covers of $xF_{m-1}F_{n}F_{m-2}y$ otherwise, and hence covers of $C(F_{n})$ in both cases.
\item Seeds of form $xF_{m-1}F{m}y$ where $x$ a suffix of $F_{m}$ and $y$ a prefix of $F_{m-1}$, $0<|x|<|F_{m}|$, $0<|y|<|F_{m-1}|$, $|x|+|y|\geq F_{m}$ and $m\in\{3,4,\dots,n-3\}$ are covers of $xF_{m-1}F_{m}F_{n}y$ ,if $F_{m}$ is a cover of $F_{n}$, or covers of $xF_{m-1}F_{n}F_{m}y$ otherwise, and hence covers of $C(F_{n})$ in both cases. 
\item Seeds of form $xF_{n-2}y$ where $x$ a suffix of $F_{n-2}$ and $y$ a prefix of $F_{n-5}F_{n-4}$ such that $0<|x|<|F_{n-2}|$, $0<|y|<|F_{n-3}|-1$ and $|x|+|y|\geq |F_{n-3}|$ are covers of $xF_{n}F_{n-2}y$ and hence covers of $C(F_{n})$. When $y=F_{n-5}F_{n-4}$ or $F_{n-5}F_{n-4}[1\dd |F_{n-4}|-1]$ the seed fails to cover $xF_{n}F_{n-2}y$, the first $F_{n-2}$ of $F_{n}$ can not be expanded further to the right. Trying to force an overlap of $xF_{n-2}y$ to the left of $F_{n}F_{n-2}y$ gives the superstrings $xF_{n-3}F_{n}F_{n-2}y$ and $xF_{n-2}F_{n-5}F_{n}F_{n-2}y$ (consider the occurrences of $F_{n-4}$ in $F_{n}$), which are not made of suffixes of $F_{n}$, as clearly $F_{n-3}$ and $F{n-5}$ are not borders of $F_{n}$ (Lemma \ref{lem all borders Fib}), and so they are not covers of $C(F_{n})$.
\end{itemize}
\qed

\end{proof}

\section{Conclusion and Future Work}\label{sec:conclusion}

In this paper, we have identified all left seeds, right seeds, seeds and covers of every Fibonacci string as well as all covers of a circular Fibonacci string under the restriction that these quasiperiodicities are also substrings of the given Fibonacci string.
Beyond their obvious theoretical interest, those results might prove useful in testing algorithms that find quasiperiodicities in strings and giving worst case examples on them or in extending the above work in general Sturmian words (the infinite Fibonacci word, a word which has every Fibonacci word as a prefix, is Sturmian).

\bibliographystyle{abbrv}
\bibliography{BIB}

\end{document}